\newtheorem{theorem}{Theorem}[section]
\newtheorem{lemma}[theorem]{Lemma}
\theoremstyle{definition}
\numberwithin{equation}{section}
\begin{document}

\title{Intersective Polynomials Arising from Sums of Powers}

\author{Bhawesh Mishra \footnote{Department of Mathematics, The Ohio State University, MW 242, 231 W. 18th Ave., Columbus, OH 43210 }}

\maketitle

\emph{Key words and phrases}: Intersective Polynomials, Intersective Sets, Polynomials with Roots Modulo All Integers.

\emph{2020 Mathematics Subject Classification}: Primary 11D79 ; Secondary 11B30, 11P05.

\begin{abstract}
Given a natural number $n \geq 2$, an integer $k$ and for a judiciously chosen $l = l(n)$ we give necessary and sufficient conditions for the polynomial $f_{n,k} = \big( \sum_{i=1}^{l} x_{i}^{n} \big) - k$ to have roots modulo every positive integer. 
\end{abstract}

\maketitle

\section{Introduction.} A set  $S \subset\mathbb{Z}$ is said to be intersective if given any set T $\subset\mathbb{Z}$ with positive upper density, one has\\ $ S$ $\cap$ $(T - T) \not\subseteq \{0\}$. The upper density of $T \subset\mathbb{Z}$ is defined as $\overline{d}(T) := \limsup_{n\rightarrow\infty} \frac{|T \cap \{-n,... , -2,-1, 0, 1, 2, ... , n\}|}{2n+1}$. A polynomial $f(x_{1}, ... , x_{n}) \in\mathbb{Z} [x_{1}, ... , x_{n}]$ is called intersective if and only if the set of its values $\{ f(x_{1}, ... , x_{n}) : x_{1}, x_{2}, ... , x_{n}\in\mathbb{Z} \}$ is intersective.

As the definition of an intersective set suggests, intersective polynomials are of interest in additive combinatorics. For example, S\'ark\"ozy and Furstenberg independently proved, in \cite{Sa1} and \cite{Fu} respectively, that the polynomial $f(x) = x^{2}$ is intersective. In other words, they showed that for every $S \subset\mathbb{N}$ with $\overline{d}(S) > 0$, $(S-S)$ contains a perfect square. The methods in \cite{Sa1} can be modified to show that for any $i \in\mathbb{N}$, the polynomial $f(x) = x^{i}$ is intersective \cite{Sa2}. Kamae and Mend\'es-France showed that a polynomial $ f(x) \in\mathbb{Z}[x]$ is intersective if and only if for each positive integer $m$, $f(x) \equiv 0$ ( mod $m$) is solvable \cite{KaMF}. A very special case of the Polynomial Szemer\'edi's Theorem along intersective polynomials, obtained by Bergelson, Leibman and Lesigne in \cite{BLL}, implies that a polynomial $g(x_{1}, ... , x_{m}) \in\mathbb{Z}[x_{1}, ... , x_{m}]$ in many variables is intersective if and only if for every positive integer $k$ there exists $n_{1}, n_{2}, ... , n_{m} \in\mathbb{Z}$ such that $g(n_{1}, n_{2}, ... , n_{m}) \equiv 0 $ ( mod $k)$.

D. Berend and Y. Bilu, in \cite{BerBil}, obtained a necessary and sufficient condition for any polynomial $f(x)$ of one variable to be intersective. Their result also implies that any polynomial in one variable that is intersective, but has no rational roots, has to be of degree greater than 4. However, there are single-variable intersective polynomials, of degree 5 and greater, that have no rational roots. For example, $h(x) = (x^{3} - 19) (x^{2} + 3) $ is intersective but has no rational roots \cite{HLS}. One can also show that if $p, q$ are distinct odd primes such that $p \equiv q \equiv 1$ ( mod $4)$ and $p$ is a square modulo $q$, then the polynomial $f(x) = (x^{2} - p) (x^{2} - q) (x^{2} - pq)$ is intersective.

It is well known that for any $n \geq 2$ and any $k \in\mathbb{Z}$, $(x^{n} - k)$ is intersective if and only if $k$ is a $n^{th}$ power. One can easily show that for any $k \in\mathbb{Z}$, the polynomial $(x^{2} + y^{2} - k)$ is intersective if and only if $k$ is sum of two integer squares. It can also be shown that $(x^{2} + y^{2} + z^{2} - k)$ is intersective if and only if $k$ is a sum of three integer squares. As a consequence of the Lagrange's four square theorem, we have that for every $l \geq 4$ and for every $k \in\mathbb{Z}$ the polynomial $\big( \sum_{i=1}^{l} x_{i}^{2} \big) - k $ is intersective. Similarly, one can ask about necessary and sufficient conditions for intersectivity polynomials that arise analogously from sums of cubes, fourth-powers and so on. Given $n \geq 2$ and $k \in\mathbb{Z}$ we obtain the necessary and sufficient conditions for the polynomial of several variables $f_{n,k}(x_{1}, ... , x_{l})$ = $\big( \sum_{i=1}^{l} x_{i}^{n} \big) - k $ to be intersective. Here the number of variables $l$ in $f_{n,k}$ will depend upon $n$. 

Polynomials of the form $\big( \sum_{i=1}^{l} x_{i}^{n} \big) - k $ are also studied in the context of the Waring's problem. The Waring's problem states that given $n \geq 3$, there exists a $r = r(n)$ such that the equation $\big( \sum_{i=1}^{r} x_{i}^{n} \big) - k = 0 $ is solvable over integers for every $k \in\mathbb{Z}$. In other words, the Waring's problem states that for each $n \geq 3$ there is a $r = r(n)$ such that every integer is a sum of at most $r$ $n^{th}$ powers. The Waring's problem has a positive answer; the values of $r = r(n)$ are known too \cite{Vaughan}. Another variant of the Waring's problem deals with determining $r^{\prime} = r^{\prime}(n)$ such that every sufficiently large number $k$ is sum of at most $r^{\prime}$ $n^{th}$ powers. If $L = L(n)$ is the smallest positive integer such that $\big( \sum_{i=1}^{L} x_{i}^{n} \big) - k$ is intersective for every $k \in\mathbb{Z}$ then $r^{\prime}(n) \geq L(n)$ \cite{Vaughan}. Studying intersectivity of polynomials of the form $\big( \sum_{i=1}^{l} x_{i}^{n} \big) - k$ can be useful in determining $r^{\prime}(n)$ in the Waring's problem (see \cite{Vaughan} for a comprehensive survey on the Waring's problem).

We shall see that if we take a large enough $l = l(n)$, the polynomial $\big( \sum_{i=1}^{l} x_{i}^{n} \big) - k $ is intersective for every $k \in\mathbb{Z}$. On the other hand, if $l = l(n)$ is too small then the necessary and sufficient conditions for $f_{n,k}(x_{1}, ... , x_{l})$ = $\big( \sum_{i=1}^{l} x_{i}^{n} \big) - k $ to be intersective might be too complicated for practical use. Therefore a bargain between the number of variables in the equation and the desired necessary and sufficient conditions for intersectivity must be made. When $n$ is odd we take $l = \lceil \frac{n+1}{2} \rceil$ and when $n$ is even we take $l$ = max $\{ \lceil \frac{2n}{3} \rceil , \lceil \frac{n+2}{2} \rceil \}$. 

The notations used in this article are explained in the next section. Section $3$ contains some elementary number-theoretic lemmas and Section $4$ consists of relevant results from additive number theory. The necessary and sufficient conditions for intersectivity of  $\big( \sum_{i=1}^{l} x_{i}^{n} \big) - k $, for odd and even $n$ are proved in Sections $5$ and $6$ respectively. Section $7$ presents some illustrative examples of the results obtained in Sections $5$ and $6$.

\section{Notations}
\begin{enumerate}
\item Given an odd natural number $n \geq 3$ and $k\in\mathbb{Z}$, we define $ l(n) = l = \lceil \frac{n+1}{2} \rceil$ and \\ $f_{n,k} := f_{n,k}(x_{1}, ... , x_{l}) =  \big[ \big( \sum_{i=1}^{l} x_{i}^{n} \big) - k \big] $. 

\item Given an even $n \in\mathbb{N}$, $k \in\mathbb{Z}$, we define $l(n) = l = $ max $\{ \lceil \frac{2n}{3} \rceil, \lceil \frac{n+2}{2} \rceil \}$ and \\ $f_{n,k} := f_{n,k}(x_{1}, ... , x_{l}) =  \big[ \big( \sum_{i=1}^{l} x_{i}^{n} \big) - k \big] $.

\item Given $n \in\mathbb{N}$, $k \in\mathbb{Z}$ we define $g_{n,k}:= g_{n,k} (x_{1}, ... , x_{l}) = \big( \sum_{i=1}^{l} x_{i}^{n} \big) - k$, where $l \in\mathbb{N}$. The difference between $g_{n,k}$ and $f_{n,k}$ is that the number of variables $l$ is fixed in $f_{n,k}$ as $l(n)$ but not in $g_{n,k}$. 

\item Given a prime $p$ we define the range of diagonal form g$_{n,0}$(x$_{1}$, ... , x$_{n}$) in $\mathbb{Z}/p\mathbb{Z}$ as \\ $R_{p}(g_{n,0}) := \{ g_{n,0}(y_{1}, ... , y_{n}) : y_{1}, ... , y_{n} \in\mathbb{Z}/p\mathbb{Z}  \}$. 

\item Given a prime $p$ and a diagonal form g$_{n,0}$(x$_{1}$, ... , x$_{n}$) we define\\ $R_{p}^{*}(g_{n,0}):= \{ g_{n,0}(y_{1}, ... , y_{n}) :$ $\exists$ $1 \leq i \leq n $ with $y_{i} \in(\mathbb{Z}/p\mathbb{Z})^{*} =  (\mathbb{Z}/p\mathbb{Z})\setminus\{0\} \}$.  

\item Given a prime $p$, $A_{d}$ will be used to denote the set of $d$-th power residues in ($\mathbb{Z}/p\mathbb{Z}$). We will denote the set $A_{d}\setminus\{0\}$, i.e. the set of non-zero $d$-th power residues, by $A_{d}^{*}$. 

\end{enumerate}

\section{Some Preliminary Lemmas}

To ensure that the polynomial $f_{n,k}$ has roots modulo every integer $n > 1$ , it is enough to find roots of $f_{n,k}$ modulo $q^{j}$ for all primes $q$ and  $j \geq 1$. This follows from the Chinese Remainder Theorem. For most of the primes $q$, one only needs to find non-zero roots of $f_{n,k}$ modulo certain low powers of $q$ to ensure that $f_{n,k}$ has roots modulo all powers of $q$. This fact is formalized in Lemma $3.1$ and Theorem $3.3$ below. 

\begin{lemma}{\textbf{[Hensel's Lemma]}}
Let $h(x_{1}, ... , x_{n})$ be a polynomial with integer coefficients in $n \geq 1$ variables such that $h (y_{1}, y_{2}, ... , y_{n}) \equiv 0 \hspace{1mm} (\text{mod} \hspace{1mm} p^{j})$ for some j $\geq 1$ and y$_{1}$, ... , y$_{n}$ $\in\mathbb{Z}$. If\\ p $\nmid$ $\frac{\partial h }{\partial x_{i}}(y_{1}, ... , y_{n})$ for some 1 $\leq$ i $\leq$ n then $\exists$ z$_{1}$, ... , z$_{n}$ $\in\mathbb{Z}$ such that $\forall$ $1 \leq i \leq n$, $z_{i} \equiv y_{i} \hspace{1mm} (\text{mod}\hspace{1mm} p^{j})$ and $h (z_{1}, z_{2}, ... , z_{n}) \equiv 0 \hspace{1mm} (\text{mod} \hspace{1mm} p^{j+1})$. [See \cite{Rosen}, page 155 for a proof ]
\end{lemma}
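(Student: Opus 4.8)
The plan is to reduce this multivariate statement to the classical single-variable Hensel lemma by freezing every variable except the $i$-th one, namely the index for which the partial derivative is a unit modulo $p$. First I would fix the index $i$ guaranteed by the hypothesis, so that $p \nmid \frac{\partial h}{\partial x_i}(y_1, \ldots, y_n)$, and define the one-variable polynomial $g(x) = h(y_1, \ldots, y_{i-1}, x, y_{i+1}, \ldots, y_n) \in \mathbb{Z}[x]$. By construction $g(y_i) = h(y_1, \ldots, y_n) \equiv 0 \pmod{p^j}$, while $g'(y_i) = \frac{\partial h}{\partial x_i}(y_1, \ldots, y_n)$ is not divisible by $p$. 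It then suffices to lift the root $y_i$ of $g$ from modulus $p^j$ to modulus $p^{j+1}$ and to set $z_m = y_m$ unchanged for all $m \neq i$; the resulting tuple automatically satisfies $z_m \equiv y_m \pmod{p^j}$ for every $m$.

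For the lift I would look for $z_i$ of the form $z_i = y_i + t p^j$ with $t \in \mathbb{Z}$ to be determined, which already guarantees $z_i \equiv y_i \pmod{p^j}$. The key computation is the Taylor expansion of $g$ about $y_i$:
\[
g(y_i + t p^j) = g(y_i) + t p^j\, g'(y_i) + \sum_{r \geq 2} \frac{g^{(r)}(y_i)}{r!}\,(t p^j)^r.
\]
Each term of the remaining sum carries a factor $p^{rj}$ with $rj \geq 2j \geq j+1$ (using $j \geq 1$), so the entire sum vanishes modulo $p^{j+1}$, giving $g(y_i + t p^j) \equiv g(y_i) + t p^j\, g'(y_i) \pmod{p^{j+1}}$. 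Writing $g(y_i) = p^j c$ for some $c \in \mathbb{Z}$, the target congruence $g(y_i + t p^j) \equiv 0 \pmod{p^{j+1}}$ becomes $p^j\big(c + t\, g'(y_i)\big) \equiv 0 \pmod{p^{j+1}}$, i.e.\ $c + t\, g'(y_i) \equiv 0 \pmod{p}$. Since $p \nmid g'(y_i)$, the class of $g'(y_i)$ is invertible in $\mathbb{Z}/p\mathbb{Z}$, so I can take $t \equiv -c\,\big(g'(y_i)\big)^{-1} \pmod{p}$ and set $z_i = y_i + t p^j$. Together with $z_m = y_m$ for $m \neq i$, this yields the required simultaneous lift.

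The only point demanding care—hardly an obstacle—is justifying that the higher-order Taylor coefficients $\frac{g^{(r)}(y_i)}{r!}$ are genuine integers, which is what allows me to conclude that the higher-order terms are divisible by $p^{j+1}$ rather than merely $p$-adically small. This holds because for $g(x) = \sum_m a_m x^m$ with $a_m \in \mathbb{Z}$ one has $\frac{g^{(r)}(y_i)}{r!} = \sum_m a_m \binom{m}{r} y_i^{\,m-r} \in \mathbb{Z}$, equivalently because $g(X+Y) \in \mathbb{Z}[X,Y]$. With this integrality in hand, the remainder of the argument is a routine verification, and the linear solvability of $c + t\, g'(y_i) \equiv 0 \pmod p$ is exactly where the nonvanishing-derivative hypothesis is used.
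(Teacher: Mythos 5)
Your proof is correct: reducing to the single-variable case by freezing the variables $x_m$ for $m \neq i$, and then performing the standard Taylor-expansion lift $z_i = y_i + t p^j$ with $t \equiv -c\,(g'(y_i))^{-1} \pmod{p}$, is the classical argument, and you rightly pin down the one subtle point, the integrality of the divided Taylor coefficients $g^{(r)}(y_i)/r!$. The paper gives no proof of its own for this lemma (it simply cites Rosen, p.~155, where essentially this same argument appears), so your proposal supplies exactly the intended proof.
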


If $q$ is a prime such that $q \nmid n$ then any non-zero solution of $h (y_{1}, y_{2}, ... , y_{n}) \equiv 0 \hspace{1mm} ( \text{ mod} \hspace{1mm} q)$ can be lifted to the solution of h(y$_{1}$, ... , y$_{n}$) $\equiv$ 0 ( mod $q^{i}$) for every $i \geq 1$, through an inductive application of Hensel's Lemma. However, if $q \mid n$ then Hensel's Lemma does not allow for lifting of the solution of $f_{n,k} \equiv$ 0 ( mod $q$) to solutions of $f_{n,k} \equiv 0$ ( mod $q^{i}$) , $i \geq 2$. For such primes $q$, we need a different result that can allow us to lift the root of $f_{n,k}$, modulo lower powers of $q$, to that modulo higher powers of $q$. 

To proceed further, we need an elementary lemma which is stated and proved below. Given a prime $p$, $a,n\in\mathbb{N}$ we denote by $p^{a} \mid\mid n$ the statement that $p^{a}$ is the highest power of $p$ dividing $n$. 

\begin{lemma}
Let $p$ be a prime, $n, i \in\mathbb{N}$ with $1 \leq i \leq n$. Let $p^{a} \mid\mid n $ for some $a \geq 1$  and $p^{b} \mid\mid i$ , for some $b \leq a$. Then $p^{a-b}$ $\mid\mid$ $\binom{n}{i}$. 
\end{lemma}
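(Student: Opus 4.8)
The plan is to pin down the exact power of $p$ in $\binom{n}{i}$ by means of the absorption identity
\[
\binom{n}{i} = \frac{n}{i}\binom{n-1}{i-1},
\]
which follows immediately from $i!\binom{n}{i} = n(n-1)\cdots(n-i+1)$. Writing $v_p(m)$ for the exponent of the highest power of $p$ dividing a nonzero integer $m$, so that the hypotheses read $v_p(n) = a$ and $v_p(i) = b$, and taking $v_p$ of both sides gives
\[
v_p\!\left(\binom{n}{i}\right) = a - b + v_p\!\left(\binom{n-1}{i-1}\right).
\]
Since $\binom{n-1}{i-1}$ is a positive integer its valuation is nonnegative, and as $b \le a$ this already yields the divisibility $p^{a-b} \mid \binom{n}{i}$. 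The exact-divisibility assertion $p^{a-b} \mid\mid \binom{n}{i}$ is thus equivalent to the single claim $p \nmid \binom{n-1}{i-1}$, and the rest of the argument would be devoted to it.

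To prove $p \nmid \binom{n-1}{i-1}$ I would avoid heavier machinery and argue directly from the product expansion
\[
\binom{n-1}{i-1} = \prod_{t=1}^{i-1}\frac{n-t}{t},
\]
showing that the contributions of $p$ from the numerator and denominator cancel term by term. The mechanism is the elementary valuation identity: for any $t$ with $v_p(t) \ne v_p(n)$ one has $v_p(n-t) = \min\bigl(v_p(n), v_p(t)\bigr)$; in particular, whenever $v_p(t) < a$, subtracting $t$ from the multiple $n$ of $p^a$ leaves the valuation unchanged, so $v_p(n-t) = v_p(t)$. If this equality holds for every index $1 \le t \le i-1$, the product is a $p$-adic unit and $p \nmid \binom{n-1}{i-1}$, completing the proof.

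The main obstacle is exactly the hypothesis needed to secure $v_p(t) < a$ throughout the range $1 \le t \le i-1$: the identity $v_p(n-t) = v_p(t)$ can fail at an index $t$ that is itself divisible by $p^a$, and controlling or excluding such indices is where the assumptions $p^a \mid\mid n$, $p^b \mid\mid i$ and $b \le a$ must all be brought to bear. I would isolate these borderline indices — the multiples of high powers of $p$ lying below $i$ — and show that the trailing-zero structure of $n$ and $i$ in base $p$ forces their contributions to cancel as well. Equivalently, the same crux can be packaged through the base-$p$ digits of $n-1$ and $i-1$ together with Lucas's theorem, or through Kummer's theorem as the statement that no carries occur when $i-1$ and $n-i$ are added in base $p$; in every formulation it is this digit/carry analysis in the top positions that carries the weight of the lemma.
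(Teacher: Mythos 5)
Your opening step is the only part of this argument that can be salvaged, and it is worth isolating: the absorption identity $\binom{n}{i}=\frac{n}{i}\binom{n-1}{i-1}$ gives $v_{p}\big(\binom{n}{i}\big)=a-b+v_{p}\big(\binom{n-1}{i-1}\big)\geq a-b$, hence $p^{a-b}\mid\binom{n}{i}$, cleanly and in full generality. But the reduction you then pose --- that exact divisibility is equivalent to $p\nmid\binom{n-1}{i-1}$ --- is left unproven, and the obstacle you flag (indices $t\leq i-1$ divisible by $p^{a}$) is not a technicality that a base-$p$ digit analysis will dispose of: it is fatal. The claim $p\nmid\binom{n-1}{i-1}$, and indeed the lemma itself, are false. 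Take $p=2$, $n=12$, $i=6$, so $a=2$, $b=1$ and $b\leq a$: then $\binom{12}{6}=924=2^{2}\cdot 231$, so $2^{2}\mid\mid\binom{12}{6}$ while the lemma asserts $2^{1}\mid\mid\binom{12}{6}$; correspondingly $\binom{11}{5}=462$ is even. (Via Kummer's theorem, which you invoke: adding $6+6$ in base $2$ produces two carries, not one.) So your attempt has a genuine gap exactly where you said it did, and no argument can close it.

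Comparing with the paper's own proof is instructive, because it commits precisely the error you were wary of. The paper expands $\binom{n}{i}=\frac{n(n-1)\cdots(n-i+1)}{i(i-1)\cdots 1}$, pairs the multiple $(p^{a}m-lp)$ of $p$ in the numerator with the multiple $(p^{b}k-lp)$ in the denominator, and asserts that their powers of $p$ cancel for every $l\geq 1$. This fails whenever $l$ itself is divisible by a high power of $p$: in the example above the pairing matches $10$ with $4$ (valuations $1$ vs.\ $2$) and $8$ with $2$ (valuations $3$ vs.\ $1$), and the mismatches produce the extra factor of $2$. Fortunately, in the one place the lemma is used (display $(3.2)$ in the proof of Theorem $3.3$), only the lower bound $v_{p}\big(\binom{n}{r}\big)\geq a-b$ is needed, to conclude that $p^{j+1}$ divides the terms with $r\geq 2$; exact divisibility plays no role there. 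That lower bound is exactly what your absorption-identity step establishes rigorously. The correct repair, for both you and the paper, is to weaken the lemma's conclusion from $p^{a-b}\mid\mid\binom{n}{i}$ to $p^{a-b}\mid\binom{n}{i}$ and prove it by your first display, discarding everything after it.
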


\begin{proof}
Since $p^{a} \mid\mid n$ and $p^{a} \mid\mid i$, assume $n = p^{a}m$  and  $i = p^{b}k$ , where $m, k \in\mathbb{N}$ such that $p \nmid m$ and $p \nmid k$. 

Note that $\binom{n}{i}$ = $\frac{n (n-1) (n-2) ... (n-i+1)}{i (i-1) (i-2) ... 1}$ and that the multiples of $p$ that appear in denominator are $p^{b}k$, ($p^{b}k - p$ ), ($p^{b}k - 2p$) , ... , ($p^{b}k$ - ($p^{b-2}k)p$). Similarly, the multiples of p that appear in the numerator are $p^{a}m$, ($p^{a}m-p$) , ($p^{a}m - 2p$) , ... , ($p^{a}m$ - ($p^{b-2}k)p$). 

Both the numerator and the denominator have a total of $(p^{b-2}k + 1)$ terms that are multiple of $p$.  Apart from the first factors in the numerator and the denominator, i.e. $p^{a}m$ and $p^{b}k$ respectively, the power of $p$ factored from the factors of the form ($p^{a}m$ - $lp$)  in the numerator is canceled by the power of $p$ coming from the terms of the form ($p^{b}k$ - $lp$) in the denominator.  for any $l \geq 1$. This exactly means that $p^{a-b} \mid\mid$ $\binom{n}{i}$.
\end{proof}

\begin{theorem}
Let $p$ be a prime,  $n \in\mathbb{N}$, $k\in\mathbb{Z}$ and $p^{a} \mid\mid $ n . Then  f$_{n,k}$(x$_{1}$, ... , x$_{l}$) $\equiv$ 0 ( mod $p^{i}$) has solutions for every  $i \geq$ 1 if and only if $(1)$ or $(2)$ is satisfied:

\begin{enumerate}

\item $ \exists$ $(x_{i})_{i=1}^{l}\in\mathbb{Z}^{l}$ such that $f_{n,k} (x_{1}, ... , x_{l}) = 0 $ i.e. $k$ is sum of $l$ number of integer $n$-th powers. 

\item
    \begin{itemize}

        \item If p is an odd prime and n is odd, then f$_{n,k}$(x$_{1}$, ... , x$_{l}$) $\equiv$ 0 ( mod $p^{j}$) has solutions for some $j \geq (a+1)$.
    
        \item If p is an odd prime and n is even, then $f_{n,k}(x_{1}, ... , x_{l})$ $\equiv$ 0 ( mod $p^{j}$) has solution  $(x_{1}, ... , x_{l})$, for some $j\geq (a+1)$, such that $p \nmid x_{i}$ for some $1 \leq i \leq l$.
    
        \item If p = 2 then $f_{n,k} (x_{1}, ... , x_{l})$ $\equiv 0$ ( mod $2^{j}$) has solution $(x_{1}, ... ,x_{l})$, for some $j \geq(a+2)$ such that $2 \nmid x_{i}$ for some $ 1 \leq i \leq l$.

\end{itemize}
\end{enumerate} 

\end{theorem}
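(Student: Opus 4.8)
The plan is to fix a prime $p$ with $p^{a} \mid\mid n$ and prove the two implications separately, since the reduction of solvability modulo every integer to solvability modulo prime powers is already furnished by the Chinese Remainder Theorem and Lemma 3.1. The engine of the whole argument is the local structure of $n$-th powers among the units of $\mathbb{Z}_p$: for $p$ odd, a unit $w \in \mathbb{Z}_p^{*}$ is an $n$-th power if and only if it is an $n$-th power modulo $p^{a+1}$, and for $p=2$ the correct threshold is $2^{a+2}$. These thresholds are exactly the exponents $a+1$ and $a+2$ appearing in condition $(2)$, and they are what Lemma 3.2 is built to produce: expanding $(y+p^{s}u)^{n}$ and reading off the valuations $v_p\binom{n}{i}$ shows that, once $y$ is a unit, the term linear in $u$ has valuation exactly $a+s$ and dominates the higher-order terms, which is precisely the input needed for a Hensel-type lift at the threshold exponent.

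For the \emph{backward} implication I would dispatch $(1)$ immediately: an honest integer zero of $f_{n,k}$ reduces modulo every $p^{i}$. The substance is $(2)$. Given a solution modulo $p^{j}$ with $j$ at or above the threshold and with some coordinate, say $x_{1}$, a unit, I would set $W := k - \sum_{i \geq 2} x_{i}^{n}$ and observe that $W \equiv x_{1}^{n} \pmod{p^{j}}$ is a unit that is an $n$-th power modulo $p^{j}$, hence modulo the threshold power, hence (by the local characterization above) an exact $n$-th power $z_{1}^{n}$ in $\mathbb{Z}_p$. Then $z_{1}^{n} + \sum_{i \geq 2} x_{i}^{n} = k$ in $\mathbb{Z}_p$, and truncating $z_{1}$ modulo $p^{i}$ yields a solution modulo $p^{i}$ for every $i$. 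The $p=2$ case runs identically, with the threshold raised to $a+2$ to accommodate the two-torsion of $\mathbb{Z}_2^{*}$.

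The odd-$n$ clause of $(2)$ omits the unit requirement, and I would justify this by exploiting that $-1$ is itself an $n$-th power when $n$ is odd. Consequently a unit coordinate can always be manufactured from a degenerate solution by trading a pair of coordinates using the cancellation $u^{n} + (-u)^{n} = 0$, together with the additive results of Section 4, so for odd $n$ mere solvability modulo $p^{j}$ already forces a non-degenerate solution. For the \emph{forward} implication I would start from solvability modulo all $p^{i}$: reducing a high solution modulo the threshold either exhibits a unit coordinate, giving $(2)$ at once, or else every solution modulo every sufficiently high power has all of its coordinates divisible by $p$.

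The step I expect to be the main obstacle is exactly this last, \emph{degenerate}, case, and it is where condition $(1)$ must be extracted. Here one writes $x_{i} = p z_{i}$, divides out, and is forced to conclude $p^{n} \mid k$; setting $k = p^{n}k'$ replaces the problem for $k$ by the same problem for $k'$ with strictly smaller $p$-adic (indeed absolute) size, so an infinite descent on $v_p(k)$ applies. The descent must terminate either at $k=0$, where the all-zero tuple is a genuine zero and gives $(1)$, or at a stage where $v_p$ drops below $n$ and a unit coordinate becomes unavoidable. The delicate point is that a non-degenerate solution for the reduced $k'$ scales back up to a \emph{degenerate} one for $k$, so recovering condition $(1)$ for the original $k$ (rather than merely for $k'$) requires that the descent actually bottom out in a genuine integer representation; controlling this, and verifying that the prescribed number of variables $l(n)$ is large enough for the Section 4 results to supply the needed representations at the bottom of the descent, is the crux on which the whole theorem turns, and is markedly more subtle for even $n$ than for odd $n$.
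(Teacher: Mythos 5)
Your backward (sufficiency) implication is sound, and it is in substance the paper's own argument: the paper performs the one-step lift $y_{1}=x_{1}+cp^{j-a}$ directly on the tuple, using Lemma 3.2 to force the higher binomial terms past $p^{j+1}$, which is exactly the computation behind your claim that a unit which is an $n$-th power modulo $p^{a+1}$ (modulo $2^{a+2}$ when $p=2$) is an $n$-th power in $\mathbb{Z}_{p}$; likewise your manufacture of a unit coordinate for odd $n$ via $u^{n}+(-u)^{n}=0$ is the paper's solution $(1,\,p^{a+1}-1,\,0,\dots,0)$. So that half stands.

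The genuine gap is the one you flag yourself: you never prove the forward (necessity) implication in the degenerate case, you only describe the descent and observe that it must ``bottom out in a genuine integer representation.'' This gap cannot be closed, because that implication is false. Take $n=6$, $p=3$ (so $a=1$, $l=4$, threshold $a+1=2$) and $k=3^{6}\cdot 13=9477$. Since $10\equiv 1\pmod 9$, the threshold lifting (your local characterization, or the paper's Lemma 3.2 computation applied to one variable) shows $10$ is a sixth power modulo $3^{i}$ for every $i$; writing $13=10+1+1+1$ and multiplying by $3^{6}$ shows $x_{1}^{6}+x_{2}^{6}+x_{3}^{6}+x_{4}^{6}\equiv 9477 \pmod{3^{i}}$ is solvable for every $i\geq 1$. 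But sixth powers modulo $9$ are $\{0,1\}$ while $9477\equiv 0\pmod 9$, so every solution modulo $3^{j}$ with $j\geq 2$ has all four coordinates divisible by $3$: condition $(2)$ fails. And the integer sixth powers not exceeding $9477$ are $0,1,64,729,4096$, no four of which sum to $9477$: condition $(1)$ fails. This is precisely the scenario you worried about: the descent stops at $k'=13$, which has a unit-coordinate $3$-adic representation, but scaling back up yields only a degenerate $3$-adic representation of $k$, which is neither an integer representation nor a non-degenerate solution. For what it is worth, the paper's own necessity proof founders at exactly this spot: after dividing out, it applies the hypothesis ``every solution modulo $p^{j}$, $j\geq a+1$, is degenerate'' --- assumed only for $k$ --- to the reduced target $k''$, which is illegitimate because non-degenerate solutions for $k''$ scale back to degenerate solutions for $k$ and so contradict nothing (in the example above, the paper's descent reaches $k''=13$ with the non-degenerate solution $(1,1,1,1)$ modulo $9$, and the argument collapses). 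So your instinct correctly located the crux, but neither your sketch nor the paper fills it; the equivalence itself needs to be repaired (e.g., by allowing in the right-hand side a representation of $k/p^{nm}$ satisfying $(2)$ for some $m\geq 0$, or $k=0$) before a proof can exist.
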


\begin{proof} \textbf{Proof of Necessity} Assume that (1) is not satisfied  i.e. $k$ is not of the form $(x_{1}^{n} + ... + x_{l}^{n})$. We want to show that condition (2) holds. If $n$ is odd and $p$ is an odd prime, then the result follows from the definition of intersectivity of a polynomial. 
        
Now assume that $p$ is odd and $n$ is even. Since $f_{n,k}(x_{1}, ... , x_{l})$ is solvable modulo all powers of $p$, it follows that $f_{n,k}(x_{1}, ... , x_{l})$ is solvable modulo $p^{j}$ for any $j \geq (a+1)$. Assume that for any $j \geq (a+1)$, if $f_{n,k}(x_{1}, ... , x_{l})$ $\equiv$ 0 ( mod $p^{j}$) then $p \mid x_{i}$ for all $ 1 \leq i \leq l $.  

If $p^{b}\mid\mid k$ with $ 0\leq b < p^{a}$, then choose $x_{1}, ... , x_{l}$ such that $f_{n,k}(x_{1}, ... , x_{l})$ $\equiv$ 0 ( mod $p^{c}$) for $c =$ max $\{a+1, b+1\}$. Since $p^{b}\mid\mid k$, $p^{c} \nmid k$ and $f_{n,k}(x_{1}, ... , x_{l})$ $\equiv$ 0 ( mod $p^{c}$) we have that $p^{c} \nmid (x_{1}^{n} + ... + x_{l}^{n})$ which is contradiction because $p^{p^{a}} \mid (x_{1}^{n} + ... + x_{l}^{n})$ and $c\leq p^{a}$. 

If $p^{b} \mid\mid k$ with $b \geq p^{a}$ then choose a solution $(x_{1}, ... , x_{l})$ to $f_{n,k}(x_{1}, ... ,x_{l})  \equiv 0$ ( mod $p^{j_{0}}$) for $j_{0} = (\alpha p^{a} + a + 1 )$, where $\alpha\in\mathbb{N}$ is chosen such that $p^{\alpha p^{a}} \mid\mid k$. 

Since $p^{p^{a}} \mid (x_{1}^{n} + ... + x_{l}^{n})$, one can divide $ (x_{1}^{n} + ... + x_{l}^{n}) - k \equiv$ 0 ( mod $p^{j}$) on both sides by $p^{p^{a}}$. This gives us $(y_{1}^{n} + ... + y_{l}^{n}) - k^{\prime} \equiv 0$ ( mod $p^{j_{1}}$). Here $y_{i} = \frac{x_{i}}{p^{p^{a}}}$, $k^{\prime} = \frac{k}{p^{p^{a}}}$ and $j_{1} = \frac{j_{0}}{p^{p^{a}}}$. 

Finally we obtain $ (z_{1}^{n} + ... + z_{l}^{n}) - k^{\prime\prime} \equiv$ 0 ( mod $p^{a+1}$) for some $z_{1}, ... , z_{l} \in\mathbb{Z}$ and $k^{\prime\prime} = \frac{k}{p^{\alpha p^{a}}}$, after repeating this process $\alpha$ number of times. 
This lands us back in the previous case where $p^{b} \mid\mid k$ with $0 \leq b < p^{a}$ because $k^{\prime\prime}$ is not divisible by $p^{p^{a}}$ anymore. Therefore, we are done. 

The case when $n$ is even and $p = 2$ is analogous, except we replace all the $(a+1)$ by $(a+2)$.\\

\textbf{Proof of Sufficiency} Assume that $ f_{n,k}(x_{1}, ... , x_{l}) \equiv $ 0 ( mod $p^{j}$) is solvable where $ j \geq (a+1)$ if $p$ is an odd prime and $j \geq (a+2)$ if $p = 2$. Since $ f_{n,k}(x_{1}, ... , x_{l}) = \big( \sum_{i=1}^{l} x_{i}^{n} \big) - k $ , we have that $\big( \sum_{i=1}^{l} x_{i}^{n} \big) - k \equiv 0 $ ( mod $p^{j}) $ i.e. for some $\alpha\in\mathbb{N}$ with $p\nmid\alpha$  we have \begin{equation}
    \big( \sum_{i=1}^{l} x_{i}^{n} \big) - k = \alpha p^{j}.  
\end{equation}
 
Note that if $n$ is odd and $p  \mid x_{i}$ for every $1 \leq i \leq l $, then $ p^{a+1} \mid k $. Then $ (1, p^{a+1} - 1, 0, ... , 0 ) $ is be a solution to $(3.1)$ with $ p \nmid 1$. This along with our hypothesis implies that without loss of generality, $p \nmid x_{1}$.

Now choose $c \in\mathbb{Z}$ such that $c \equiv \frac{-\alpha}{c_{0}x_{1}^{n-1}}$ ( mod $p$), where $c_{0} = \frac{n}{p^{a}}$. Note that such a $c$ exists because $p \nmid x_{1}$ and $p \nmid c_{0}$. Note that $c \not\equiv 0$ ( mod $p$) because $p \nmid \alpha$. 

Define $ y_{1} = (x_{1} + c.p^{j-a})$, so $y_{1}^{n} = \sum_{r=0}^{n} \binom{n}{r} x_{1}^{n-r}.c^{r}.p^{rj-ra} $. Now assume that $p^{b} \mid\mid$ r , then we have $ b \leq log_{p}(r)$. From Lemma $3.2$, we also have

\begin{equation}
p^{rj-ra+a-b} \mid\mid \Big[ \binom{n}{r}x_{1}^{n-r}c^{r}p^{rj-ra} \Big].
\end{equation} 
Now we will deal with the rest of the proof depending upon whether $p$ is odd or $p = 2$. 

\begin{itemize}

\item \textbf{ p is an odd prime}

Since $j \geq (a+1)$ we have that 
\begin{align*}
&rj-ra+a-b = (r-1).(j-a)+j-b  \\ &\geq r-1+j -b  \geq r-1+j-log_{p}(r) 
\\&\geq   j + ( r - 1 - log_{p}(r) )  \geq j + ( r - 1 - \frac{ln(r)}{ln(p)} )  \\&= j + h(r) , 
\end{align*}
where $ h(r) =  r - 1 - \frac{ln(r)}{ln(p)} $ is a function that is increasing with $r$ and $h(2) = 1 - \frac{ln(2)}{ln(p)} > 0 $. This gives us that for every $ r \geq 2$ , $ h(r) \geq 1 $ and hence $rj-ra+a-b \geq (j+1)$. 

Therefore it follows from $(3.2)$ that for every $ r \geq 2, p^{j+1} \mid \Big[ \binom{n}{r}x_{1}^{n-r}c^{r}p^{rj-ra} \Big]$. So we have, 
\begin{align*}
&y_{1}^{n} = \sum_{r=0}^{n} \binom{n}{r} x_{1}^{n-r}c^{r}p^{rj-ra} \equiv  x_{1} + \binom{n}{1}x_{1}^{n-1} c  p^{j-a} \\ &\equiv x_{1}^{n} + c \big( c_{0}x_{1}^{n-1}p^{j} \big) (\text{ mod }  p^{j+1}). \end{align*}
Therefore, 
\begin{align*}
&y_{1}^{n} + x_{2}^{n} + ... + x_{l}^{n} - k \equiv \big[ x_{1}^{n} + x_{2}^{n} + ... + x_{l}^{n} - k \big] + c \big( c_{0}x_{1}^{n-1}p^{j} \big)\\ &\equiv p^{j} \big( \alpha + c c_{0}x_{1}^{n-1} \big)  \equiv 0  (\text{ mod } p^{j+1}) 
\end{align*}
Here the second equivalence uses $(3.1)$ and the last equivalence is due to the choice of c. So $(y_{1}, x_{2}, ... , x_{l})$ is a solution to $f_{n,k} (x_{1}, ... , x_{l} ) \equiv 0 $( mod p$^{j+1})$. 

Since $ p \nmid x_{1}$, $p \nmid y_{1}$ . Therefore, we can repeat the whole process inductively to obtain solution to $f_{n,k} (x_{1}, ... , x_{l} ) \equiv 0 $( mod p$^{i}$) for any $ i \geq j $. Hence we have shown the sufficiency of the above condition for odd primes p. 

\item \textbf{p = 2}

Since $j \geq (a+2) $ we have that 
\begin{align*} &rj-ra+a-b = (r-1)(j-a-1) + r + j - 1 - b   \\ &\geq (r-1) + (r + j - 1 - b) \geq j + (2r - 2 - log_{2}(r) ) \\ & = j + g(r), \end{align*} where $g(r) = (2r - 2 - log_{2}(r) )$ is an increasing function of r and g(2) = 1. This implies that for any $ r \geq 2 $, we have $ rj-ra+a-b \geq  (j+1) $. Therefore, $(3.2)$ implies that for any $r \geq 2$ $2^{j+1} \mid  \Big[ \binom{n}{r}x_{1}^{n-r}c^{r}p^{rj-ra} \Big]$. Therefore we have

\begin{align*}y_{1}^{n} \equiv x_{1}^{n} + nx_{1}^{n-1}c2^{j-a}  \equiv x_{1}^{n} + c_{0}x_{1}^{n-1}c2^{j} (\text{ mod } 2^{j+1}). \end{align*}. 

Hence we have,  
\begin{align*}
&(y_{1}^{n} + x_{2}^{n} + ... + x_{l}^{n} - k ) \equiv \big(  x_{1}^{n} + x_{2}^{n} + ... + x_{l}^{n} - k \big) + c_{0}x_{1}^{n-1}c2^{j} \\ &\equiv 2^{j} \big( \alpha + c.x_{1}^{n-1}c_{0} \big)  \equiv 0 ( \text{ mod } 2^{j+1}). \end{align*} where the second equivalence is from $(3.1)$ and last equivalence is due to the choice of c. This shows that $(y_{1}, x_{2}, ... , x_{l})$ is a solution to $f_{n,k} (x_{1}, ... , x_{l} ) \equiv 0  $( mod 2$^{j+1})$. 

Since $ 2 \nmid x_{1}$, $2 \nmid y_{1}$. Therefore, we can repeat the whole process inductively to obtain solution to $f_{n,k} (x_{1}, ... , x_{l} ) \equiv 0 $( mod 2$^{i}$) for every $ i \geq j $. Hence we have shown the sufficiency of the above condition for prime p = 2. 
\end{itemize}
\end{proof}

\begin{lemma}
Let $p$ be an odd prime of the form $p = (ds+1) $ for some $ d,s \in\mathbb{N}$ and  define $A_{d} = \{ x^{d} : x\in\mathbb{Z}/p\mathbb{Z} \}$. Then $|A_{d}| = (s+1) $. 
\end{lemma}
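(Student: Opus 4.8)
The plan is to exploit the cyclic structure of the multiplicative group $(\mathbb{Z}/p\mathbb{Z})^{*}$, which has order $p - 1 = ds$ since $p$ is prime. First I would peel off the element $0$, which belongs to $A_{d}$ because $0^{d} = 0$, and reduce the problem to counting the \emph{nonzero} $d$-th power residues $A_{d}^{*} = A_{d} \setminus \{0\}$. The set $A_{d}^{*}$ is exactly the image of the group homomorphism $\phi : (\mathbb{Z}/p\mathbb{Z})^{*} \to (\mathbb{Z}/p\mathbb{Z})^{*}$ given by $\phi(x) = x^{d}$, so determining $|A_{d}^{*}|$ becomes an application of the first isomorphism theorem.

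Next I would compute the kernel of $\phi$, namely the set of solutions to $x^{d} \equiv 1 \pmod{p}$. Since $\mathbb{Z}/p\mathbb{Z}$ is a field, the polynomial $x^{d} - 1$ has at most $d$ roots; and because $d \mid (p-1) = ds$, fixing a primitive root $g$ shows that $g^{0}, g^{s}, g^{2s}, \ldots, g^{(d-1)s}$ are $d$ distinct roots. Hence $|\ker \phi| = d$ exactly. By the first isomorphism theorem, $|A_{d}^{*}| = |\mathrm{image}(\phi)| = (p-1)/|\ker \phi| = ds/d = s$. Adding back the element $0$ then yields $|A_{d}| = |A_{d}^{*}| + 1 = s + 1$, as required.

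The only step demanding care is the claim that the kernel has precisely $d$ elements; this rests entirely on the divisibility $d \mid (p-1)$, which is exactly what the hypothesis $p = ds + 1$ provides (were $d \nmid p-1$, the count would drop to $\gcd(d, p-1)$). I would flag this as the crux of the argument.

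As an alternative that sidesteps the homomorphism language, one can argue directly from a fixed primitive root $g$: every nonzero residue is a power of $g$, so the nonzero $d$-th powers are precisely $\{ g^{dk} : k \in \mathbb{Z} \}$, and since $g^{dk}$ depends only on $dk \bmod (p-1)$, the number of distinct such values is $(p-1)/\gcd(d, p-1) = ds/d = s$. Either route gives the same count, and I would choose whichever meshes best with the notation already set up in Section 2, where $A_{d}$ and $A_{d}^{*}$ are introduced.
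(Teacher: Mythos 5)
Your proof is correct, and your primary route is genuinely different in structure from the paper's, though both rest on the cyclicity of $(\mathbb{Z}/p\mathbb{Z})^{*}$. The paper fixes a generator $a$ of $(\mathbb{Z}/p\mathbb{Z})^{*}$ and counts collisions directly: $(a^{i})^{d} \equiv (a^{j})^{d} \pmod{p}$ if and only if $(p-1) \mid d(i-j)$, if and only if $s \mid (i-j)$, so there are exactly $s$ nonzero $d$-th powers, and adjoining $0$ gives $s+1$. You instead package the same structure into the homomorphism $\phi(x) = x^{d}$, compute $|\ker\phi| = d$ (at most $d$ roots of $x^{d}-1$ over a field, and exactly $d$ exhibited via the primitive root), and apply the first isomorphism theorem to get $|A_{d}^{*}| = (p-1)/d = s$. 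These are two views of one argument, since the paper's collision classes are precisely the cosets of your kernel, but each buys something: the paper's computation is more elementary and self-contained, while yours isolates exactly where the hypothesis $d \mid (p-1)$ enters, and your observation that in general the count of nonzero $d$-th powers is $(p-1)/\gcd(d,p-1)$ is the counting fact that underlies Lemma 4.1 of the paper (that $A_{n} = A_{d}$ for $d = (n,p-1)$), which the paper cites from the literature rather than proves. Your alternative sketch via a fixed primitive root is essentially the paper's proof verbatim.
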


\begin{proof}
Since $p$ is an odd prime we know that the set of units in $\mathbb{Z}/p\mathbb{Z}$ is generated by a single element i.e. ($\mathbb{Z}/p\mathbb{Z})^{*}$ = $\langle a \rangle$. Note that $(a^{i})^{d} \equiv (a^{j})^{d}) $ ( mod $p$) if and only if $(p-1) \mid d(i-j) $ if and only if $ds \mid d(i-j)$ if and only if $s \mid (i-j)$. This implies that there are exactly $s$ number of $d$-th powers in ($\mathbb{Z}/p\mathbb{Z})^{*}$ , $a^{0}, a^{1}, ... , a^{s-1}$ , i.e. we have $|A_{d}| = (s+1) $. 

\end{proof}
 
\section{Results from Additive Number Theory}

In this section, we collect some results regarding sumsets in finite fields $\mathbb{Z}/p\mathbb{Z}$, which we will repeatedly use in the later sections.  

\begin{lemma}
Let  $n \geq 1$ and $p$ be a prime number. Let A$_{n}$ = $\{ x^{n} : x \in\mathbb{Z}/p\mathbb{Z} \}$. If $d = (n, p-1)$, then A$_{n}$ = A$_{d}$. 
\end{lemma}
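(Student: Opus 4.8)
The plan is to reduce everything to the structure of the multiplicative group $(\mathbb{Z}/p\mathbb{Z})^{*}$, which is cyclic of order $p-1$. First I would dispose of the element $0$: since $n \geq 1$ and $d = (n, p-1) \geq 1$, we have $0^{n} = 0 = 0^{d}$, so $0$ lies in both $A_{n}$ and $A_{d}$. It therefore suffices to prove that the nonzero $n$-th powers coincide with the nonzero $d$-th powers.

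Next I would fix a generator $g$ of $(\mathbb{Z}/p\mathbb{Z})^{*} = \langle g \rangle$, so that every nonzero element equals $g^{k}$ for some integer $k$. The set of nonzero $n$-th powers is exactly the image of the endomorphism $x \mapsto x^{n}$, namely the subgroup $\langle g^{n} \rangle$; likewise the nonzero $d$-th powers form the subgroup $\langle g^{d} \rangle$. The key group-theoretic fact I would invoke is that in a cyclic group of order $m$ one has $\langle g^{t} \rangle = \langle g^{(t,m)} \rangle$, because $g^{t}$ has order $m/(t,m)$ and a subgroup of a cyclic group is determined by its order. Applying this with $m = p-1$ and $t = n$ gives $\langle g^{n} \rangle = \langle g^{(n,p-1)} \rangle = \langle g^{d} \rangle$.

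It then remains to match this against the nonzero $d$-th powers directly. Here I would use that $d = (n, p-1)$ divides $p-1$, so that $(d, p-1) = d$, and hence the nonzero $d$-th powers form $\langle g^{(d,p-1)} \rangle = \langle g^{d} \rangle$. Combining the two computations shows that the nonzero $n$-th powers and the nonzero $d$-th powers are the same subgroup $\langle g^{d} \rangle$; together with the $0$ case this yields $A_{n} = A_{d}$.

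There is essentially no serious obstacle: the only points requiring care are treating $0$ separately, since the multiplicative-group argument only sees nonzero elements, and recording explicitly that $d \mid (p-1)$ so that the $d$-th powers are already stable under the gcd reduction. For the degenerate prime $p = 2$ the multiplicative group is trivial and $d = 1$, so both $A_{n}$ and $A_{d}$ equal all of $\mathbb{Z}/2\mathbb{Z}$, consistent with the general argument.
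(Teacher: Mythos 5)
Your proof is correct and complete. Note that the paper itself does not prove this lemma at all; it simply points to Nathanson's book, so there is no internal argument to compare against. The standard textbook proof (the one being cited) runs slightly differently: one direction follows from $d \mid n$, since $x^{n} = (x^{n/d})^{d}$ gives $A_{n} \subseteq A_{d}$, and the other uses B\'ezout to write $d = un + v(p-1)$, so that for $x \neq 0$ Fermat's little theorem gives $x^{d} = (x^{u})^{n}(x^{p-1})^{v} = (x^{u})^{n}$, whence $A_{d} \subseteq A_{n}$. That route avoids invoking the existence of a primitive root. Your route instead uses cyclicity of $(\mathbb{Z}/p\mathbb{Z})^{*}$ and the fact that $\langle g^{t} \rangle = \langle g^{(t,m)} \rangle$ in a cyclic group of order $m$; this is equally elementary once cyclicity is granted, and it has the virtue of matching the technique the paper itself uses elsewhere (the proof of Lemma 3.4 counts $d$-th powers via a generator of $(\mathbb{Z}/p\mathbb{Z})^{*}$ in exactly this way). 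Your handling of $0$ separately and your remark that $d \mid (p-1)$ forces $(d, p-1) = d$ are both necessary and correctly done, and the degenerate case $p = 2$ is consistent with the general argument as you say.
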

See \cite{Melvyn}, page 60. 

\begin{theorem}
Let $ p > 3 $ be a prime number and  $n \in\mathbb{N}$ such that  $1 < (n, p-1) < \frac{p-1}{2}$. Recall that $g_{n,0}(x_{1}, ... , x_{l}) = \sum_{i=1}^{l} x_{i}^{n}$ for some  $l \geq$ 1 . Then the cardinality of the range of $g_{n,0}$ in $\mathbb{Z}/p\mathbb{Z}$ satisfies $|R_{p}(g_{n,0})| \geq \text{min} \{ p, \frac{(2l-1)(p-1)}{(n,p-1)} + 1 \}$
\end{theorem}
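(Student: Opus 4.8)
The plan is to convert the statement into a purely multiplicative counting problem and then to prove a growth estimate of the form ``each summand contributes two fresh cosets.''

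First I would set $d = (n,p-1)$ and $s = \frac{p-1}{d}$, so that the hypothesis $1 < d < \frac{p-1}{2}$ reads exactly $3 \le s \le \frac{p-1}{2}$. By Lemma $4.1$ the value set of $x^{n}$ is $A_{d}$, and by Lemma $3.4$ we have $|A_{d}| = s+1$, so $A_{d} = \{0\} \cup H$ where $H := A_{d}^{*}$ is the multiplicative subgroup of nonzero $d$-th power residues, of order $s$. The structural observation I would build on is that $R_{p}(g_{n,0}) \setminus \{0\}$ is a union of cosets of $H$: if $v = \sum_{i} x_{i}^{n}$ and $t \in H$, then $t = w^{n}$ for some $w$, whence $tv = \sum_{i}(wx_{i})^{n}$ is again in the range. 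Consequently $|R_{p}(g_{n,0})| = 1 + s\,N_{l}$, where $N_{l}$ is the number of $H$-cosets met by the $l$-fold sumset
\[ lA_{d} \;=\; \bigcup_{j=0}^{l} jH , \]
with $jH$ the $j$-fold sumset of $H$. Since $N_{l} \le d$, with equality forcing $R_{p}(g_{n,0}) = \mathbb{Z}/p\mathbb{Z}$, the theorem becomes equivalent to the clean inequality $N_{l} \ge \min\{d,\ 2l-1\}$ (the endpoint $N_{l}=d$ producing the value $p$ in the minimum).

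Second, I would record the recursion driving this count. Writing $W_{j}$ for the nonzero part of $jA_{d}$ — a union of $N_{j}$ cosets, with $W_{1} = H$ and $N_{1} = 1$ — the passage $l = j \to j+1$ adjoins $W_{j} + H$. Because $W_{j}$ is invariant under multiplication by $H$, for each $h \in H$ one has $W_{j} + h = h\,(1 + W_{j})$, so that
\[ W_{j} + H \;=\; H\cdot(1 + W_{j}) \]
is a product set; hence the cosets appended at the $(j+1)$-st step are precisely those met by the translate $1 + W_{j}$. It therefore suffices to prove the one-step estimate $N_{j+1} \ge \min\{d,\ N_{j}+2\}$, and then the required bound follows by induction from $N_{1}=1$.

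Third comes the heart of the matter. The prototype is the first step: $H+H = H\cdot(1+H)$, so its cosets are those met by $(1+H)\setminus\{0\}$, and if there were only one we would get $|H+H| \le s+1$, contradicting the Cauchy--Davenport bound $|H+H| \ge 2s-1$ (here $s \ge 3$ gives $2s-1 > s+1$). Thus $H+H$ meets at least two cosets, which together with $c_{0}=H$ from the summand $0+H$ gives $N_{2} \ge 3$ once one verifies those two cosets are not both $c_{0}$. I expect the genuine obstacle to be upgrading this to a gain of two cosets at \emph{every} step. A direct appeal to Cauchy--Davenport on $W_{j}+H$ only yields $|W_{j}+H| \ge sN_{j}+s-1$, which, as $W_{j}+H$ is a union of cosets, certifies merely $N_{j}+1$ cosets, i.e. a gain of one and hence only the weaker bound $N_{l}\ge l$. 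Extracting the second new coset must exploit the dilation invariance of the sumset rather than its cardinality alone: assuming $1+W_{j}$ met at most one coset outside those of $W_{j}$ would confine $H\cdot(1+W_{j})$ to $W_{j}$ plus a single extra coset, and I would try to contradict this by analysing the representation counts $r(v) = \#\{(a,b)\in W_{j}\times H : a+b = v\}$, which are constant on cosets and so descend to the quotient $(\mathbb{Z}/p\mathbb{Z})^{*}/H$. The delicate case is the one where the new additive mass falls back into the subgroup coset $c_{0}$ — this genuinely occurs (for instance when $H$ contains consecutive integers), so here I anticipate needing the coset-constant counts, or a character sum over $(\mathbb{Z}/p\mathbb{Z})^{*}/H$, rather than Cauchy--Davenport by itself. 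Once the one-step estimate is in hand the induction closes, and when $N_{l}$ attains $d$ the range exhausts $\mathbb{Z}/p\mathbb{Z}$, delivering the value $p$.
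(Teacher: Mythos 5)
Your reduction is sound, and it is in fact the skeleton of the standard argument: note first that the paper itself contains no proof of this statement --- it is quoted from \cite{Melvyn} (it is the classical Chowla--Mann--Straus theorem) --- so the comparison here is with that cited proof. Writing $d=(n,p-1)$, $s=(p-1)/d$, $H=A_{d}^{*}$, your observations that $R_{p}(g_{n,0})\setminus\{0\}$ is a union of $H$-cosets, that $|R_{p}(g_{n,0})|=1+sN_{l}$, that the theorem is equivalent to $N_{l}\ge\min\{d,\,2l-1\}$, and that everything follows by induction once one has the one-step estimate $N_{j+1}\ge\min\{d,\,N_{j}+2\}$, are all correct. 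The trouble is that this one-step estimate \emph{is} the theorem, and you do not prove it: your third paragraph only records what you ``would try'' and what you ``anticipate needing.'' Even your prototype for the first step does not deliver what it claims. Cauchy--Davenport does show that $H+H$ meets at least two distinct $H$-cosets (since $|H+H|\ge 2s-1>s+1$ for $s\ge 3$), but as $2A_{d}=\{0\}\cup H\cup(H+H)$, this yields $N_{2}\ge 3$ only when \emph{neither} of those cosets is $c_{0}=H$ itself; two distinct cosets can never ``both be $c_{0}$,'' and when one of them is $c_{0}$ (which, as you note, genuinely occurs) your count collapses to $N_{2}\ge 2$, exactly the gain-of-one bound you were trying to beat. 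As it stands, the proposal proves only $N_{l}\ge l$, i.e.\ $|R_{p}(g_{n,0})|\ge\min\{p,\,ls+1\}$, which is strictly weaker than the statement.

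The missing ingredient is a critical-pair (inverse) theorem, namely Vosper's theorem, and with it your step closes in a few lines --- no representation counts or character sums are needed. Since $jA_{d}$ and $(j+1)A_{d}=jA_{d}+A_{d}$ are each a union of $H$-cosets together with $0$, the increment $|(j+1)A_{d}|-|jA_{d}|=(N_{j+1}-N_{j})s$ is a multiple of $s$; Cauchy--Davenport makes it at least $s$ unless $(j+1)A_{d}=\mathbb{Z}/p\mathbb{Z}$; and if it equals exactly $s$, then $|jA_{d}+A_{d}|=|jA_{d}|+|A_{d}|-1<p$. Moreover $|(j+1)A_{d}|\equiv 1 \pmod{s}$, so it cannot equal $p-1=ds$; hence it is at most $p-2$ and Vosper's theorem applies, forcing $A_{d}$ to be an arithmetic progression, say with common difference $\delta$. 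But $hA_{d}=A_{d}$ for every $h\in H$, and an arithmetic progression of length $s+1\ge 4$ (and less than $p-1$) determines its common difference up to sign, so $h\delta=\pm\delta$ for all $h\in H$, i.e.\ $H\subseteq\{1,-1\}$ and $s\le 2$ --- contradicting $s\ge 3$, which is precisely where the hypothesis $(n,p-1)<\frac{p-1}{2}$ enters. Thus every step either gains two cosets or exhausts $\mathbb{Z}/p\mathbb{Z}$, and your induction finishes the proof. Without Vosper's theorem (or an equivalent rectification result), the dilation symmetry and coset-constant representation counts you gesture at do not by themselves rule out the delicate case you identify, where the new additive mass falls back into $c_{0}$.
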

See \cite{Melvyn}, page 60. 

\begin{lemma}
Let $ p > 3 $ be a prime number and  $l,n \in\mathbb{N}$ such that $1 < (n, p-1) < \frac{p-1}{2}$. Also let $d = (n, p-1)$ then we have R$_{p}$(g$_{n,0}$) = R$_{p}$(g$_{d,0}$), regardless of the number of variables l taken in g$_{n,0}$. 
\end{lemma}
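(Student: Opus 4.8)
The plan is to reduce the equality of the two ranges to the equality of power-residue sets supplied by Lemma $4.1$, by first recognizing each range as an $l$-fold sumset.

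First I would unwind the definition of the range for a fixed number of variables $l$. As the tuple $(y_1, \ldots, y_l)$ runs over $(\mathbb{Z}/p\mathbb{Z})^l$, each coordinate $y_i$ runs independently over $\mathbb{Z}/p\mathbb{Z}$, so each $y_i^n$ runs over the set of $n$-th powers $A_n = \{ x^n : x \in \mathbb{Z}/p\mathbb{Z} \}$, and the map $y \mapsto y^n$ is by definition surjective onto $A_n$. Consequently the value $g_{n,0}(y_1, \ldots, y_l) = \sum_{i=1}^{l} y_i^n$ ranges over exactly the $l$-fold Minkowski sumset of $A_n$ with itself; that is, $R_p(g_{n,0}) = A_n + A_n + \cdots + A_n$ with $l$ summands. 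Applying the identical reasoning to the exponent $d$ gives $R_p(g_{d,0}) = A_d + A_d + \cdots + A_d$ with $l$ summands.

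Next I would invoke Lemma $4.1$ with $d = (n, p-1)$, which yields $A_n = A_d$ as subsets of $\mathbb{Z}/p\mathbb{Z}$. Since the two ranges are the $l$-fold sumsets of two sets that are literally equal, and the sumset operation depends only on the underlying set, the two sumsets coincide, giving $R_p(g_{n,0}) = R_p(g_{d,0})$. Because $l$ was an arbitrary fixed positive integer throughout, the conclusion holds regardless of the number of variables taken in $g_{n,0}$, as claimed. I would also remark that the hypothesis $1 < (n, p-1) < \frac{p-1}{2}$ is not actually required for this identity: it is inherited from the ambient setting of Theorem $4.2$ and merely ensures that $A_n$ is a proper nontrivial set of residues, whereas Lemma $4.1$ holds for every $n \geq 1$.

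There is no serious obstacle here; the only point requiring genuine care is the justification that letting the $y_i$ vary independently produces the full sumset rather than some subset of it, and this is immediate from the independence of the coordinates together with the surjectivity of $y \mapsto y^n$ onto $A_n$. Thus the entire argument is a translation of the problem into sumset language followed by a single substitution $A_n = A_d$.
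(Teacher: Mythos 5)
Your proposal is correct and follows essentially the same route as the paper: the paper's proof also invokes Lemma $4.1$ to identify the $n$-th and $d$-th power residue sets and concludes that the ranges of their sums coincide, with your version merely spelling out the sumset interpretation more explicitly. Your side remark that the hypothesis $1 < (n,p-1) < \frac{p-1}{2}$ is not needed for this particular identity is also accurate.
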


\begin{proof} Lemma $4.1$ implies that the set of n-th powers and set of d-th powers in $\mathbb{Z}/p\mathbb{Z}$ are same for $d = (n,p-1)$. This means that the range of their sums are same the same i.e. R$_{p}$(g$_{n,0}$) = R$_{p}$(g$_{d,0}$). 
\end{proof}

\begin{lemma}
Let $ p > 3 $ be a prime number, $l,n  \in\mathbb{N}$ such that $1 < (n, p-1) < \frac{p-1}{2}$ and $d = (n, p-1)$ i.e. $p = (ds+1)$ for some  $s \in\mathbb{N}$ then $|R_{p}(g_{n,0})|$ $\geq$ min $\{$ $p, (2l-1)s+1$ $\}$, regardless of the number of variables l $\in\mathbb{N}$ taken in g$_{n,0}$. 
\end{lemma}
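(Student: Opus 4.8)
The plan is to derive Lemma $4.4$ directly from Theorem $4.2$ by substituting the relation $p = ds+1$ into the bound already established there; there is no new counting to do. First I would observe that the hypotheses here are word-for-word those of Theorem $4.2$: a prime $p > 3$ together with $1 < (n, p-1) < \frac{p-1}{2}$. Hence Theorem $4.2$ applies with no extra assumptions and yields, for any $l \in \mathbb{N}$,
\begin{equation*}
|R_{p}(g_{n,0})| \geq \min\Big\{ p,\ \frac{(2l-1)(p-1)}{(n,p-1)} + 1 \Big\}.
\end{equation*}

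The only remaining work is an algebraic simplification of the second entry of the minimum. Since $d = (n, p-1)$ and, by hypothesis, $p = ds + 1$, we have $p - 1 = ds$, so that $\frac{p-1}{(n,p-1)} = \frac{ds}{d} = s$. Substituting this identity turns $\frac{(2l-1)(p-1)}{(n,p-1)} + 1$ into $(2l-1)s + 1$, which is precisely the asserted lower bound $\min\{p,\ (2l-1)s+1\}$.

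I expect no genuine obstacle: the lemma is a notational convenience that trades the quotient $\frac{p-1}{(n,p-1)}$ for the single parameter $s$ defined by $p = ds+1$, a form better suited to the counting arguments in the later sections. The clause \emph{regardless of the number of variables} $l$ simply records that Theorem $4.2$ already holds for every $l \geq 1$, and since the substitution $\frac{p-1}{(n,p-1)} = s$ is independent of $l$, this uniformity is inherited verbatim. (If one prefers to phrase things purely in terms of $d$-th powers, Lemma $4.3$ lets us replace $g_{n,0}$ by $g_{d,0}$ first, but this is not needed, as Theorem $4.2$ is stated directly for $n$.)
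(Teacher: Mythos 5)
Your proposal is correct and takes essentially the same route as the paper: both derive the bound by applying Theorem $4.2$ and substituting $\frac{p-1}{(n,p-1)} = \frac{ds}{d} = s$. The only difference is that the paper first invokes Lemma $4.3$ to pass from $g_{n,0}$ to $g_{d,0}$, a detour you correctly observe is unnecessary since Theorem $4.2$ is stated directly for $n$.
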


\begin{proof} Since we saw in Lemma 4.3 that R$_{p}$(g$_{n,0}$) = R$_{p}$(g$_{d,0}$), regardless of $l$, we could replace $l$ by $d$ in the statement of the lemma. Since $p = ds + 1$ we have $\frac{p-1}{d} = s$ which gives us $|$ R$_{p}$(g$_{n,0}$) $|$ $\geq$ min $\{$ $p, (2l-1)s+1 $ $\}$ on application of Theorem 4.2. 
\end{proof}

\begin{theorem}\textbf{[Generalized Cauchy-Davenport Theorem] }

Let $h \geq 2$ be a natural number, p be a prime number and $\emptyset\neq A \subset\mathbb{Z}/p\mathbb{Z}$. Define $ hA = \{ a_{1} + ... + a_{h} : a_{i}\in A$ \text{for every} $ 1 \leq i \leq h \}$. Then $|hA| \geq $ min $\{p, h|A|-h+1 \}$
\end{theorem}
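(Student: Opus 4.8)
The plan is to prove the Generalized Cauchy–Davenport Theorem by induction on $h$, using the ordinary ($h=2$) Cauchy–Davenport inequality as the base case. For the base case, $|A + A| \geq \min\{p, 2|A| - 1\}$ is the classical Cauchy–Davenport Theorem for two subsets of $\mathbb{Z}/p\mathbb{Z}$ (taking both summands equal to $A$), which I would cite as a known result since the paper is already freely invoking deep additive-combinatorial theorems from Nathanson. For the inductive step I would write $hA = (h-1)A + A$ and set $B = (h-1)A$, so that $hA = B + A$. Applying the two-set Cauchy–Davenport inequality to the pair $(B, A)$ gives
\begin{equation*}
|hA| = |B + A| \geq \min\{p, |B| + |A| - 1\}.
\end{equation*}

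Then I would feed in the inductive hypothesis $|B| = |(h-1)A| \geq \min\{p, (h-1)|A| - (h-1) + 1\}$ and track the two cases. If $|B| = p$, or more precisely if the inductive lower bound already reaches $p$, then $hA \supseteq B$ forces $|hA| = p$ and the bound is immediate. Otherwise $|B| \geq (h-1)|A| - (h-1) + 1$, and substituting into the displayed inequality yields
\begin{equation*}
|hA| \geq \min\{p, \, (h-1)|A| - (h-1) + 1 + |A| - 1\} = \min\{p, \, h|A| - h + 1\},
\end{equation*}
which is exactly the claimed bound, completing the induction.

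The main subtlety — and the step I would be most careful with — is the bookkeeping around the $\min$ with $p$. One must verify that the two-set inequality genuinely applies when $B$ is a near-full subset (i.e. that $B$ and $A$ are both nonempty, which follows since $A \neq \emptyset$ implies $(h-1)A \neq \emptyset$), and that when the inner quantity $h|A| - h + 1$ exceeds $p$ the bound correctly collapses to $p$. This requires the elementary but essential observation that a sumset $B + A$ in $\mathbb{Z}/p\mathbb{Z}$ either equals all of $\mathbb{Z}/p\mathbb{Z}$ or has size controlled by the additive bound; the transition between the ``$< p$'' regime and the ``$= p$'' regime is where an unwary induction can drop a term. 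A clean way to handle this uniformly is to note that $\min\{p, x\}$ is monotone in $x$, so chaining the inductive estimate through the monotone $\min$ preserves the inequality without case-splitting, and I expect that is the cleanest route. As an alternative to invoking the classical two-set Cauchy–Davenport as a black box, one could build everything from the Davenport transform, but for the purposes of this paper citing the $h=2$ case and running the induction is the most economical argument.
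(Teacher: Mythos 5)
Your proof is correct. Worth noting up front, though: the paper does not actually prove this theorem at all --- it states it and refers the reader to Nathanson \cite{Melvyn}, page 44, so there is no internal argument to compare against. What you have written is essentially the standard textbook proof (indeed, the one found in the cited reference): induct on $h$ with the classical two-set Cauchy--Davenport inequality $|A+B| \geq \min\{p, |A|+|B|-1\}$ as the engine, decompose $hA = (h-1)A + A$, and chain the inductive bound through the monotone map $x \mapsto \min\{p,x\}$. Your case analysis is sound; the one small imprecision is the claim that $hA \supseteq B$ when $|B| = p$ --- in general $B + A$ contains only a \emph{translate} $B + a$ of $B$, not $B$ itself, but since a translate has the same cardinality (and here $B = \mathbb{Z}/p\mathbb{Z}$ anyway, so $B+A = \mathbb{Z}/p\mathbb{Z}$), the conclusion $|hA| = p$ is unaffected. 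Note also that in the inductive step you genuinely need the two-set version of Cauchy--Davenport (since $(h-1)A$ and $A$ are different sets), not merely the symmetric $|A+A|$ case you state as the base; this is fine as long as you cite the classical theorem in its two-set form. What your route buys relative to the paper's bare citation is self-containedness: modulo one classical result, the reader sees why the $h$-fold bound holds, and the argument makes transparent where the ``$-h+1$'' loss accumulates, one unit per added summand.
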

[See \cite{Melvyn}, page 44]

\section{Intersectivity of $f_{n,k}$ for Odd $n$}

Throughout this section, $n$ will denote a odd natural number greater than $1$. If $p$ is a prime such that $ p \nmid n $  and $ k \in\mathbb{Z}$ , then $f_{n,k} \equiv $ 0 ( mod $p^{i}$) has solutions for every $ i \geq 1 $ if and only if the equation $f_{n,k} \equiv $ 0 ( mod $p$) has a non-zero solution. This is a consequence of Hensel's Lemma. Proving $f_{n,k} \equiv $ 0 ( mod $p$) has solutions for all $k \in\mathbb{Z}$ is equivalent to $R_{p}(f_{n,0}) = \mathbb{Z}/p\mathbb{Z}$ i.e. $|R_{p}(f_{n,0})| = p$. 

Since $f_{n,0} = lA_{n} = \{ a_{1} + a_{2} + ... + a_{l} : a_{1}, a_{2}, ... , a_{l} \in A_{n} \}$, where $l = \lceil \frac{n+1}{2} \rceil$ is the number of variables in $f_{n,0}$, it suffices to show that $|lA_{n}| = p$ for $ l = \lceil \frac{n+1}{2} \rceil$.  To summarize, whenever $p \nmid n$, $|lA_{n}| = p$ implies that $f_{n,k} \equiv 0 $ ( mod $p^{i})$ has solutions for every $ k,i \in\mathbb{N}$. Hence we have transformed the problem of showing existence of roots of the given polynomial modulo powers of such primes into a problem of showing that the cardinality of the $l$-fold sumset of $n$-th powers in $\mathbb{Z}/p\mathbb{Z}$ is all of $\mathbb{Z}/p\mathbb{Z}$. 

\begin{lemma}
Let  $n \in\mathbb{N}$, $k\in\mathbb{Z}$ and $p$ be an odd prime. Also let $ 1 > d = (n, p-1)$ i.e. $ p = (ds+1)$ for some $s\in\mathbb{N}$ and $s \geq 2$. Then we have the following:
\begin{itemize}

    \item For $h = d$, we have that $|hA_{n}| = p$.
    
    \item If $ p > (2d+1)$, then for $ h = \lceil \frac{d+1}{2} \rceil $, $|R_{p}(x_{1}^{n} + x_{2}^{n} + ... + x_{h}^{n})| = p $. 
    
\end{itemize}
\end{lemma}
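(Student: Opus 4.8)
The plan is to reduce both bullets to the sumset-cardinality results collected in Section 4, after first pinning down the size of $A_n$. By Lemma 4.1 we have $A_n = A_d$ for $d = (n,p-1)$, and since $p = ds+1$, Lemma 3.4 gives $|A_n| = |A_d| = s+1$. This single computation underlies everything that follows.

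For the first bullet I would apply the Generalized Cauchy--Davenport Theorem (Theorem 4.5) to the set $A = A_n$ with $h = d$. Since $d > 1$ we have $h = d \geq 2$ and $A_n \neq \emptyset$, so the hypotheses are met, and
\[
h|A_n| - h + 1 = d(s+1) - d + 1 = ds + 1 = p.
\]
Hence $|dA_n| \geq \min\{p, p\} = p$, and since trivially $|dA_n| \leq |\mathbb{Z}/p\mathbb{Z}| = p$, we conclude $|dA_n| = p$. Note that this part uses only $s \geq 2$ and needs no extra restriction on $p$.

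For the second bullet, the first thing to observe is that the hypothesis $p > 2d+1$ is exactly the condition $s \geq 3$: indeed $p = ds+1 > 2d+1 \iff ds > 2d \iff s > 2$. Consequently $d = (p-1)/s \leq (p-1)/3 < (p-1)/2$, so together with $d > 1$ the parameters satisfy the requirement $1 < (n,p-1) < (p-1)/2$ needed to invoke Lemma 4.4 (equivalently Theorem 4.2). Noting that $R_p(x_1^n + \cdots + x_h^n)$ is precisely the $h$-fold sumset $hA_n$, I would apply Lemma 4.4 with the number of variables equal to $h = \lceil \frac{d+1}{2}\rceil$, which yields
\[
|hA_n| \geq \min\{p, (2h-1)s + 1\}.
\]
Since $h \geq \frac{d+1}{2}$ gives $2h - 1 \geq d$, we get $(2h-1)s + 1 \geq ds + 1 = p$, so the minimum equals $p$ and therefore $|R_p(x_1^n + \cdots + x_h^n)| = |hA_n| = p$.

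I do not anticipate a genuine obstacle; both statements are short deductions from Section 4. The only points requiring a little care are the bookkeeping that $|A_n| = s+1$ (so that the Cauchy--Davenport bound in the first bullet lands exactly on $p$ rather than falling short), and the verification that $p > 2d+1$ upgrades $s \geq 2$ to $s \geq 3$, which is what supplies the strict inequality $d < (p-1)/2$ demanded by Lemma 4.4 and Theorem 4.2. Without this upgrade the boundary case $s = 2$, where $d = (p-1)/2$, would lie outside the scope of those results and would have to be handled separately.
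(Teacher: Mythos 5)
Your proposal is correct and follows essentially the same route as the paper: both bullets are deduced from $|A_n| = |A_d| = s+1$ together with the Generalized Cauchy--Davenport Theorem (for $h=d$) and Lemma 4.4 (for $h = \lceil \frac{d+1}{2}\rceil$), with the same computations $d(s+1)-d+1 = p$ and $(2h-1)s+1 \geq ds+1 = p$. The only cosmetic differences are that you route the verification of $d < \frac{p-1}{2}$ through the equivalence $p > 2d+1 \iff s \geq 3$ (the paper translates it directly) and that you cite Lemma 3.4 explicitly for the cardinality count, which the paper folds into its reference to Lemma 4.1.
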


\begin{proof}
Using Lemma 4.1 we have that $|A_{n}| = |A_{d}| = (s+1)$. If we have $h = d$, then $ h |A_{d}| - h + 1 =  h(s+1)-h+1 = (hs+1) = (ds+1) = p$. This along with Theorem $4.5$ gives $|hA_{d}| \geq$ min $\{p, h|A_{d}|-h+1\}$ = min $\{p, p\}$ = $p$. 

Now assume that $p > (2d+1)$ i.e. $1 < d < \frac{p-1}{2}$, then Lemma $4.4$ implies that for every $m \in\mathbb{N}$ we have $|R_{p}(x_{1}^{n} + x_{2}^{n} + ... + x_{m}^{n})| \geq$ min $\{ p, (2m-1)s+1\} $. Therefore, if we define $ h = \lceil \frac{d+1}{2} \rceil $, we have that $ (2h-1) \geq d $ implying that $ (2h-1)s + 1 \geq (ds+1) = p$. Therefore it follows that for $ h = \lceil \frac{d+1}{2} \rceil $, $|R_{p}(x_{1}^{n} + x_{2}^{n} + ... + x_{h}^{n})| = p $. 
\end{proof}

\begin{lemma}
Let $n \in\mathbb{N}$, $k\in\mathbb{Z}$ be such that $n > 1$ is odd and let $p$ be prime such that $ p > (2n+1) $. Then $f_{n,k} \equiv $ 0 ( mod $p^{i}$) has solutions $\forall i \geq 1$. 
\end{lemma}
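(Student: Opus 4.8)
The plan is to reduce solvability modulo all powers of $p$ to a single statement about $n$-th power residues, namely that $|lA_n| = p$, and then to lift by Hensel's Lemma. The hypothesis $p > 2n+1$ does two things at once: it guarantees $p \nmid n$ (so Hensel's Lemma will be applicable at any coordinate carrying a nonzero value), and, writing $d = (n,p-1)$, it forces $1 \leq d \leq n < \frac{p-1}{2}$ together with $p \geq 7 > 3$, which is precisely the regime in which Lemma $4.1$ and Lemma $5.1$ apply.

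First I would establish $|lA_n| = p$, i.e. that the $l$-fold sumset of $n$-th powers in $\mathbb{Z}/p\mathbb{Z}$ is all of $\mathbb{Z}/p\mathbb{Z}$, where $l = \lceil \frac{n+1}{2}\rceil$. I would split on the value of $d$. If $d = 1$, then by Lemma $4.1$ one has $A_n = A_1 = \mathbb{Z}/p\mathbb{Z}$, so a single variable already exhausts $\mathbb{Z}/p\mathbb{Z}$ and a fortiori $|lA_n| = p$. If $d \geq 2$, then $1 < d < \frac{p-1}{2}$ and $p > 2d+1$, so the second bullet of Lemma $5.1$ yields $|R_p(x_1^n + \cdots + x_h^n)| = p$ for $h = \lceil \frac{d+1}{2}\rceil$. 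Since $d \leq n$ gives $h = \lceil \frac{d+1}{2}\rceil \leq \lceil \frac{n+1}{2}\rceil = l$, I can pad the $h$-variable representation with zeros in the remaining $l-h$ coordinates to conclude $|lA_n| = p$ with the prescribed number of variables.

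Next, for an arbitrary $k \in \mathbb{Z}$ I would exhibit a solution of $f_{n,k} \equiv 0 \ (\mathrm{mod}\ p)$ having at least one coordinate not divisible by $p$, which is exactly the input Hensel's Lemma requires. Since $|lA_n| = p$, the congruence $\sum_{i=1}^{l} x_i^n \equiv k \ (\mathrm{mod}\ p)$ is solvable. If $k \not\equiv 0 \ (\mathrm{mod}\ p)$, any solution automatically has a nonzero coordinate. If $k \equiv 0 \ (\mathrm{mod}\ p)$, I would use that $n$ is odd: the tuple $(1,-1,0,\ldots,0)$ satisfies $1^n + (-1)^n + 0 + \cdots + 0 = 0 \equiv k$, and its first coordinate is a unit modulo $p$. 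This step relies on $l \geq 2$, which holds because $l = \lceil \frac{n+1}{2}\rceil \geq 2$ for every odd $n \geq 3$.

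Finally I would lift. At a solution $(x_1,\ldots,x_l)$ with, say, $p \nmid x_1$, the partial derivative $\frac{\partial f_{n,k}}{\partial x_1} = n x_1^{n-1}$ is not divisible by $p$, since $p \nmid n$ and $p \nmid x_1$. Lemma $3.1$ then produces a solution modulo $p^2$ congruent to the original modulo $p$, hence again with $p \nmid x_1$, and iterating gives solutions modulo $p^i$ for every $i \geq 1$. I expect the main obstacle to be the combinatorial step $|lA_n| = p$, and inside it the bookkeeping that the number of variables $l = \lceil \frac{n+1}{2}\rceil$ dictated by $n$ always dominates the threshold $\lceil \frac{d+1}{2}\rceil$ dictated by $d = (n,p-1)$; the remaining pieces are direct appeals to Lemma $4.1$, Lemma $5.1$, and Hensel's Lemma.
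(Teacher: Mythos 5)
Your proposal is correct and follows essentially the same route as the paper's proof: reduce to showing $|lA_n| = p$ via the case split $d = 1$ (Lemma $4.1$) versus $1 < d \leq n$ (second bullet of Lemma $5.1$, using $p > 2n+1 \geq 2d+1$ and $\lceil \frac{d+1}{2}\rceil \leq l$), handle $p \mid k$ with the tuple $(1, p-1, 0, \ldots, 0)$, and lift by Hensel's Lemma since $p \nmid n$. The only difference is cosmetic ordering --- the paper secures the nonzero coordinate before the sumset computation, you do the reverse --- so there is nothing to fix.
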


\begin{proof}
Since $ p > (2n+1) $, $ p \nmid n$. Now assume that $(x_{1}, ... , x_{l})$ is a solution to $f_{n,k} \equiv $ 0 ( mod $p$) such that $p \mid x_{i}$ for every $1 \leq i \leq l$. Then we will have that $p \mid k$, and hence $(1, p-1, ... , 0 , ... , 0)$ would be a solution to $f_{n,k} \equiv $ 0 ( mod $p$). Therefore, without loss of generality we can assume that $p \nmid x_{1}$. 

So by Hensel's Lemma, existence of a solution to $f_{n,k} \equiv $ 0 ( mod p) will ensure the existence of a solution to $f_{n,k} \equiv $ 0 ( mod $p^{i}$) for every $ i \geq 1$.  In other words, it suffices to show that for a fixed  odd $n$ and any prime $ p > (2n+1) $, we have $|lA_{n}| = p$. 

Define $d = (n, p-1)$ and note that $d \leq n$ because $d \mid n$. If $d = 1$ then by Lemma $4.1$, we have that the set of $n$-th powers in $\mathbb{Z}/p\mathbb{Z}$ is all of $\mathbb{Z}/p\mathbb{Z}$ and the result follows. Therefore from now on, we assume that $ 1 < d \leq n$. 

Regardless of the value of $d$, $ 1 < d \leq n$, since $p > (2n+1)$, we have that $p > (2d+1)$, which means that we can apply the second itemized result from Lemma $5.1$. Therefore we have that for $h = \lceil \frac{d+1}{2} \rceil$, we have $|R_{p}(x_{1}^{n} + x_{2}^{n} + ... + x_{h}^{n})| = p $ i.e. $|hA_{n}| = p$. Since $d \leq n$ we have that $h = \lceil \frac{d+1}{2} \rceil \leq \lceil \frac{n+1}{2} \rceil$.  So, $|lA_{n}| = p$ for l = $\lceil \frac{n+1}{2} \rceil$ as desired.  
\end{proof}

\begin{lemma}
Let $n\in\mathbb{N}$ be an odd integer and $p$ be an odd prime. Then for $m = \frac{p-1}{2}$, we have $R_{p}(x_{1}^{n}+ ... + x_{m}^{n}) = \mathbb{Z}/p\mathbb{Z}$. 
\end{lemma}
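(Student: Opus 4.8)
The plan is to identify $A_n$ with a power-residue set of controlled size and then apply the generalized Cauchy--Davenport theorem (Theorem 4.5); the number of variables $m = \frac{p-1}{2}$ is exactly what a parity argument on $d := (n, p-1)$ forces to be enough.

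First I would record the key arithmetic constraint. Since $n$ is odd and $d \mid n$, the integer $d$ is odd; since $p$ is an odd prime, $p - 1$ is even. Writing $p - 1 = 2^{a} t$ with $t$ odd and $a \geq 1$, every odd divisor of $p-1$ divides $t$, so $d \leq t \leq \frac{p-1}{2}$. Equivalently, setting $s = \frac{p-1}{d}$ (so that $p = ds + 1$), we have $s \geq 2$. By Lemma 4.1, $A_n = A_d$, and by Lemma 3.4, $|A_n| = |A_d| = s + 1$.

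If $d = 1$, then $A_n = A_1 = \mathbb{Z}/p\mathbb{Z}$; since $0 \in A_n$, choosing $x_2 = \cdots = x_m = 0$ already gives $R_p(x_1^n + \cdots + x_m^n) \supseteq A_n = \mathbb{Z}/p\mathbb{Z}$, and we are done. (This also covers the degenerate range $m = 1$, which occurs only for $p = 3$, where necessarily $d = 1$.) If instead $d > 1$, then $d \geq 3$ is odd with $d \leq \frac{p-1}{2}$, forcing $m = \frac{p-1}{2} \geq 3 \geq 2$, so Theorem 4.5 applies to $A = A_n$ with $h = m$. It gives $|m A_n| \geq \min\{\, p,\, m|A_n| - m + 1 \,\}$, and here $m|A_n| - m + 1 = ms + 1$. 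Using $s \geq 2$ we get $ms = \frac{p-1}{2}\, s \geq p - 1$, hence $ms + 1 \geq p$ and $\min\{p, ms+1\} = p$. Since $|mA_n| \leq p$ trivially, we conclude $|mA_n| = p$, i.e. $R_p(x_1^n + \cdots + x_m^n) = \mathbb{Z}/p\mathbb{Z}$.

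The only genuinely substantive step is the parity bound $d \leq \frac{p-1}{2}$: it is what makes $\frac{p-1}{2}$ summands suffice and what sidesteps the borderline case (excluded by the strict hypothesis of Theorem 4.2 and Lemma 4.4) where $d$ could equal $\frac{p-1}{2}$. Everything after that is a one-line Cauchy--Davenport estimate, the only care being to separate off $d = 1$ so as not to invoke Theorem 4.5 when $m < 2$.
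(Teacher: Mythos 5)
Your proof is correct, but it takes a genuinely different route from the paper. The paper's own argument is completely elementary and constructive: since $n$ and $p$ are both odd, $0$, $1$ and $-1$ all lie in $A_{n}$, and $\{-\tfrac{p-1}{2}, \dots, -1, 0, 1, \dots, \tfrac{p-1}{2}\}$ is a complete set of residues modulo $p$; so any class with representative $i \neq 0$ is a sum of $|i| \leq \tfrac{p-1}{2}$ copies of $1$ or of $-1$ (padded with zeros), and $0$ is handled by $1 + (-1)$. No sumset machinery is invoked at all. You instead run the result through the section's additive-combinatorial apparatus: Lemma 4.1 to replace $A_{n}$ by $A_{d}$, Lemma 3.4 to get $|A_{n}| = s+1$, the parity observation that $d = (n,p-1)$ is an odd divisor of the even number $p-1$ (hence $d \leq \tfrac{p-1}{2}$, i.e.\ $s \geq 2$), and then Theorem 4.5 to conclude $|mA_{n}| \geq \min\{p, ms+1\} = p$. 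Your parity step and the paper's key step are really two faces of the same fact --- $d$ odd forces both $s \geq 2$ and $-1 \in A_{d}$ --- but the proofs diverge after that. What the paper's version buys is self-containedness and transparency: it needs only $\pm 1 \in A_{n}$. What yours buys is methodological uniformity with Lemmas 5.1, 5.2 and 5.4 (which all argue via Cauchy--Davenport cardinality bounds), and it makes visible that $m = \tfrac{p-1}{2}$ is far from tight whenever $s$ is large, since the same estimate gives coverage with roughly $\tfrac{p-1}{2s}$ summands. Your handling of the degenerate cases ($d=1$, and $m=1$ when $p=3$, where Theorem 4.5's hypothesis $h \geq 2$ would fail) is careful and correct.
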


\begin{proof}
Note that since $n$ and $p$ are odd, the set of $n$-th powers in $\mathbb{Z}/p\mathbb{Z}$ has at least 3-elements 0, 1 and -1. In addition, S = $\{-\frac{(p-1)}{2}, ... , -1, 0, 1, ... , \frac{p-1}{2} \}$ is a complete set of coset representatives modulo p. Therefore, if $ m = \frac{p-1}{2}$ we could write any coset representative in S, say $i \neq 0$ as a sum of $i$ $1$s or -$1$s depending on whether $i$ is positive or negative. This together with 0 = 1+(-1) implies $R_{p}(x_{1}^{n}+ ... + x_{m}^{n}) = \mathbb{Z}/p\mathbb{Z}$.  
\end{proof}

\begin{lemma}
Let  $n\in\mathbb{N}$, $k\in\mathbb{Z}$ such that $n \geq 3$ is odd and let $p$ be a prime such that $ p < (2n+1) $ and $p \nmid n$. Then $f_{n,k} \equiv $ 0 ( mod $p^{i}$) has solutions $\forall i \geq 1$. 
\end{lemma}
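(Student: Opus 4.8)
The plan is to reduce everything, via Hensel's Lemma, to a single sumset statement in $\mathbb{Z}/p\mathbb{Z}$, and then to dispatch that statement using Lemma $5.1$. Since $p \nmid n$, the partial derivative $\frac{\partial f_{n,k}}{\partial x_i} = n\,x_i^{n-1}$ is a unit modulo $p$ at any point with $p \nmid x_i$, so by Lemma $3.1$ a solution of $f_{n,k} \equiv 0 \pmod{p}$ in which some coordinate is a unit lifts to a solution modulo $p^i$ for every $i \geq 1$. Thus it suffices to exhibit, for each $k$, such a solution modulo $p$, and this in turn reduces to proving $|lA_n| = p$ with $l = \lceil \frac{n+1}{2}\rceil$. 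Indeed, if $|lA_n| = p$ then $f_{n,k} \equiv 0 \pmod p$ is solvable for every $k$; and should the solution produced have all coordinates divisible by $p$ (which forces $p \mid k$), I would instead use $(1, p-1, 0, \ldots, 0)$, which works because $1^n + (p-1)^n \equiv 1 + (-1)^n = 0 \pmod p$ as $n$ is odd, and which has a unit coordinate. This last substitution needs $l \geq 2$, guaranteed by $n \geq 3$.

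It then remains to show $|lA_n| = p$. Set $d = (n, p-1)$, so that $A_n = A_d$ by Lemma $4.1$. If $d = 1$, then $A_n = \mathbb{Z}/p\mathbb{Z}$ and the claim is immediate (this also disposes of $p = 2$, where $d = 1$). So assume $d > 1$. The crucial observation is that the hypothesis $p < 2n+1$ forces $d$ to be a small, proper divisor of $n$. Since $n$ is odd, $d \mid n$ is odd, while $p - 1$ is even; an odd divisor of an even number is at most half of it, so $d \leq \frac{p-1}{2} < n$, the last inequality being exactly $p < 2n+1$. Hence $d$ is a proper divisor of $n$, and because $n$ is odd the quotient $n/d$ is an odd integer $\geq 3$, giving $d \leq \frac{n}{3} < \lceil \frac{n+1}{2}\rceil = l$.

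With $d > 1$ and $s := \frac{p-1}{d} \geq 2$ (the latter again from $d \leq \frac{p-1}{2}$), the first bullet of Lemma $5.1$ applies and yields $|dA_n| = p$. Since $d \leq l$ and $0 \in A_n$, padding with zero coordinates gives $|lA_n| = p$, which completes the argument once combined with the Hensel reduction of the first paragraph.

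I expect the main obstacle to be the bookkeeping at the boundary of the hypothesis, namely verifying that $d$ cannot equal $n$ (equivalently that $s \geq 2$); this is where the parity of $n$ and the strict inequality $p < 2n+1$ are both essential, and it is precisely the extremal configuration $d = \frac{p-1}{2}$ that the companion Lemma $5.3$ is designed to settle directly, since there $A_n = \{0, 1, -1\}$ and $l \geq \frac{p-1}{2}$, so that $lA_n = \{-l, \ldots, l\} = \mathbb{Z}/p\mathbb{Z}$. The reduction to a unit-coordinate solution for Hensel's Lemma is the other point requiring care, but it is handled uniformly by the $(1, p-1, 0, \ldots, 0)$ substitution noted above.
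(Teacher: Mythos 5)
Your proposal is correct and follows essentially the same route as the paper: reduce via Hensel's Lemma (with the $(1,p-1,0,\ldots,0)$ fallback) to showing $|lA_n| = p$, observe that $d = (n,p-1)$ must be a proper divisor of $n$ and hence $d \leq \frac{n}{3} < l$, and apply the first bullet of Lemma 5.1 with $h = d$. The only (harmless) variation is how you rule out $d = n$ --- you use that an odd $d$ dividing the even number $p-1$ satisfies $d \leq \frac{p-1}{2} < n$, while the paper notes $d = n$ would force $p = n+1$, which is even and so not prime; your version has the small advantage of also making the hypothesis $s \geq 2$ of Lemma 5.1 explicit, a point the paper leaves unchecked.
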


\begin{proof}
Similar to Lemma $5.2$, it suffices to show that $|lA_{n}|$ = $p$ for $l = \lceil \frac{n+1}{2} \rceil$ because $ p \nmid n $.  Define $d = (n, p-1)$ then we have that $p = ds + 1$. Since $p < (2n+1) $ , $d = n$ implies $p = (n+1)$. However, since $n$ is odd, $p = (n+1)$ cannot be prime. Therefore, we have that $d \leq \frac{n}{3}$ because $d \mid n $ and $d < n$. 

Recall that by Lemma 4.1, $g_{n,0} = g_{d,0}$. Since $d = (n, p-1)$ we could write $p = (ds+1)$. Using the first result from Lemma $5.1$, we have that for $h = d $, $|hA_{n}| = p $. Since $ h = d \leq \frac{n}{3} < l = \lceil \frac{n+1}{2} \rceil$, we have that $|lA_{n}| = p$ as desired. 
\end{proof}

\begin{theorem}\textbf{[Characterization of Intersectivity of $f_{n,k}$]}

Let $n > 1$ be an odd integer, $k \in\mathbb{Z}$ and $n = \prod_{i=1}^{t} p_{i}^{a_{i}}$ be the unique prime factorization of n. Define $N = \prod_{i=1}^{t} p_{i}^{a_{i}+1}$. Then $f_{n,k}$ is intersective if and only if following is satisfied:

\begin{enumerate}

    \item If $(2n+1)$ is not prime, then $f_{n,k} \equiv$ 0 ( mod N) is solvable. 
    
    \item If $(2n+1)$ is prime then $f_{n,k} \equiv$ 0 ( mod $(2n+1)N$) is solvable.
\end{enumerate} 

\end{theorem}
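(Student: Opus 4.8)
The plan is to convert intersectivity into a statement about congruences and then dispatch the primes into a few cases already handled by the preceding lemmas, so that only the congruences modulo $N$ and, conditionally, modulo $2n+1$ survive. First I would invoke the criterion of Kamae--Mend\'es-France and Bergelson--Leibman--Lesigne recalled in the introduction: $f_{n,k}$ is intersective if and only if $f_{n,k}\equiv 0 \pmod{m}$ is solvable for every positive integer $m$, which by the Chinese Remainder Theorem is equivalent to solvability modulo $q^{j}$ for every prime $q$ and every $j\geq 1$. I then split the primes not dividing $n$. For such a prime with $q>2n+1$, Lemma $5.2$ gives solvability modulo every power of $q$ with no condition on $k$; for such a prime with $q<2n+1$ (this includes $q=2$, since $n$ is odd), Lemma $5.4$ does the same. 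Hence every prime not dividing $n$, apart possibly from $q=2n+1$, is automatically satisfied and imposes no constraint on $k$.

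Second, for each prime $p_{i}\mid n$ with $p_{i}^{a_{i}}\mid\mid n$ I would apply Theorem $3.3$. Since $n$ and $p_{i}$ are odd, its alternative $(2)$ asserts solvability of $f_{n,k}\equiv 0 \pmod{p_{i}^{\,j}}$ for some $j\geq a_{i}+1$; reducing a higher solution modulo $p_{i}^{\,a_{i}+1}$ and, conversely, reading off the case $j=a_{i}+1$ shows this is equivalent to solvability modulo $p_{i}^{\,a_{i}+1}$, while alternative $(1)$ (an honest integer root) only strengthens it. Thus solvability modulo all powers of $p_{i}$ is equivalent to solvability modulo $p_{i}^{\,a_{i}+1}$, and by the Chinese Remainder Theorem requiring this for all $i$ simultaneously is precisely solvability modulo $N=\prod_{i}p_{i}^{\,a_{i}+1}$.

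Third, and this is the crux, I would treat the borderline prime $p=2n+1$. Here $p-1=2n$, so $(n,p-1)=n$, and by Lemma $4.1$ together with Lemma $3.4$ the $n$-th power residues number exactly $s+1=3$ (with $s=(p-1)/n=2$); since $0,1,-1$ are distinct $n$-th powers modulo $p>2$, they are all of them, so $A_{n}=\{0,1,-1\}$. Because $l=\lceil \frac{n+1}{2} \rceil<p$, the sumset $lA_{n}$ cannot fill $\mathbb{Z}/p\mathbb{Z}$, so $2n+1$ genuinely restricts $k$. The decisive point is that a single congruence modulo $p$ already suffices: as $p\nmid n$, the derivative $nx_{i}^{n-1}$ is a unit whenever $p\nmid x_{i}$, so Hensel's Lemma lifts any solution with a nonzero coordinate to all powers of $p$; and such a coordinate always exists, since a solution with $k\not\equiv 0$ has one automatically, while for $k\equiv 0$ the tuple $(1,-1,0,\dots,0)$ works because $l\geq 2$ and $1^{n}+(-1)^{n}=0$. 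Therefore, when $2n+1$ is prime, solvability modulo all its powers reduces to solvability modulo $2n+1$.

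Finally I would assemble the pieces. The only surviving conditions are solvability modulo $N$ and, when $2n+1$ is prime, additionally modulo $2n+1$. Because any common divisor of $2n+1$ and $n$ divides $(2n+1)-2n=1$, the modulus $2n+1$ is coprime to $N$, so a last application of the Chinese Remainder Theorem fuses the two congruences into solvability modulo $(2n+1)N$, yielding exactly the two stated cases. I expect the main obstacle to be the honest treatment of the prime $2n+1$: one must confirm both that it is truly exceptional (its $n$-th power residues being too few for the sumset to cover $\mathbb{Z}/p\mathbb{Z}$) and that, despite this, Hensel lifting collapses its entire tower of prime-power conditions to the single congruence modulo $2n+1$.
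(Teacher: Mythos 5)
Your proposal is correct and follows essentially the same route as the paper: reduce intersectivity to solvability modulo all prime powers, dispatch primes not dividing $n$ via Lemmas $5.2$ and $5.4$, collapse the primes dividing $n$ to the single congruence modulo $N$ via Theorem $3.3$ and the Chinese Remainder Theorem, and handle the exceptional prime $2n+1$ by Hensel lifting since $(2n+1)\nmid n$. Your treatment is in fact somewhat more careful than the paper's at the prime $2n+1$ (the nonzero-coordinate issue and the tuple $(1,-1,0,\dots,0)$, which the paper only makes explicit inside the proof of Lemma $5.2$), but the underlying argument is the same.
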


\begin{proof} Necessity of the conditions follows from the definition the of intersectivity of the polynomial $f_{n,k}$. Therefore, we will now prove the sufficiency. Using Lemmas $5.2$ and $5.4$, we already have that $f_{n,k} \equiv$ 0 ( mod $p^{i}$) has solutions for any $i \geq 1$ and all primes p such that $p \neq (2n+1)$ and $p \neq p_{i}$ for any $ 1 \leq i \leq n$. Furthermore, if $p = p_{i}$  then by Theorem $3.3$ it is enough to have a solution to $f_{n,k}$ $\equiv$ 0 ( mod $p_{i}^{a_{i}+1}$) to ensure that $f_{n,k} \equiv$ 0 ( mod $p_{i}^{j}$) has a solution for any $j \geq 1$.

Therefore if $(2n+1)$ is not prime, $f_{n,k}$ $\equiv$ 0 ( mod m) has solutions for any natural number $m$ as long as $f_{n,k}$ $\equiv$ 0 ( mod $p_{i}^{a_{i}+1}$) has solution for all $ 1 \leq i \leq n$, which is same as saying that $f_{n,k} \equiv$ 0 ( mod N) is solvable, by the Chinese Remainder Theorem. 

If $(2n+1)$ is prime, then for $f_{n,k}$ to be intersective, we also need existence of a solution to $f_{n,k} \equiv 0$ ( mod $(2n+1)$). Note that since $(2n+1)\nmid n$, Hensel's lemma will ensure existence of solutions to $f_{n,k} \equiv $ 0 , modulo higher powers of $(2n+1)$.  Therefore the existence of a solution to $f_{n,k} \equiv 0$ ( mod $(2n+1)N$) is enough to ensure that $f_{n,k} \equiv 0$ ( mod $m$) has solution for every $ m \geq 1$. 
\end{proof}

\section{Intersectivity of $f_{n,k}$ for Even $n$}

Recall that we used Lemmas $5.1 $ to $5.3$ to obtain the characterization of intersectivity of $f_{n,k}$ for odd $n$ in Theorem $5.5$. When $n$ was odd and $p$ a prime such that $p >(2n+1)$, without loss of generality, we could assume that the solution to $(x_{1}^{n} + ... + x_{l}^{n} - k ) \equiv$ 0 ( mod $p$) was non-zero. This was because both $-1$ and $1$ were always a $n^{th}$-power residues. Then, we could apply Hensel's Lemma. However when $n$ is even, $1$ is a $n^{th}$-power residue but $-1$ is not. Therefore the existence of a non-zero solution to $(x_{1}^{n} + ... + x_{l}^{n} - k ) \equiv$ 0 ( mod $p$) may not follow when $p \mid k$. 

Hence to obtain results akin to Lemmas $5.1$ to $5.3$ for even $n$, we have to replace the set $A_{n}$ by $A_{n}^{*}$ = $A_{n}\setminus\{0\}$. This will ensure that whenever we have $(x_{1}^{n} + ... + x_{l}^{n} - k ) \equiv$ 0 ( mod $p$) for even $n$ and prime $p$, there exists $i$ with $1 \leq i \leq l$ and $p \nmid x_{i}$. This means that we can use Hensel's Lemma whenever $p \nmid n$. In fact, given a fixed $k$, one only needs to use $A_{n}^{*}$ for those primes $p$ that divide $k$. However due to our interest in characterization of intersectivity of $f_{n,k}$ for any $k \in\mathbb{Z}$, replacing $A_{n}$ by $A_{n}^{*}$ for all primes $p$ is desirable. For exactly the same reason, we also want to replace $R_{p}(g_{n,0})$ by $R_{p}^{*}(g_{n,0})$. 

Recall that whenever $(n, p-1) = d$ and $p = (ds+1)$, we had $|A_{n}| = |A_{d}| = (s+1)$. Therefore, $|A_{n}^{*}| = |A_{d}^{*}| = s$. 

\begin{lemma}
Let $p > 3$ be a prime number, $l,n \in\mathbb{N}$ such that $n$ is even and $1 < (n, p-1) < \frac{p-1}{2}$. Also assume that $1 < d = (n, p-1) $ i.e. $p = (ds+1) $ for some $s\in\mathbb{N}$, then $|R_{p}^{*}(g_{n,0})| \geq $ min $\{p, (2l-1) s\}$ regardless of number of variables. 
\end{lemma}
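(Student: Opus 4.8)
The plan is to derive the estimate for the punctured range $R_{p}^{*}(g_{n,0})$ from the estimate for the full range $R_{p}(g_{n,0})$ already furnished by Lemma $4.4$, paying at most a single element for the passage. I would first record the elementary set identity
\begin{equation*}
R_{p}(g_{n,0}) = R_{p}^{*}(g_{n,0}) \cup \{0\}.
\end{equation*}
Indeed, any value $y_{1}^{n} + \cdots + y_{l}^{n}$ is either the all-zero sum, which contributes $0$, or else has some summand with $y_{i} \in (\mathbb{Z}/p\mathbb{Z})^{*}$, which places it in $R_{p}^{*}(g_{n,0})$. In particular $|R_{p}^{*}(g_{n,0})| \geq |R_{p}(g_{n,0})| - 1$.

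Next I would invoke Lemma $4.4$, which under the standing hypotheses $p > 3$ and $1 < d = (n,p-1) < \frac{p-1}{2}$ (so that $p = ds+1$ with $2 \leq s \leq p-1$) gives $|R_{p}(g_{n,0})| \geq \min\{p, (2l-1)s + 1\}$. Combining,
\begin{equation*}
|R_{p}^{*}(g_{n,0})| \geq \min\{p, (2l-1)s+1\} - 1 = \min\{p-1, (2l-1)s\}.
\end{equation*}
Because $p$ is prime and $2 \leq s \leq p-1$, the product $(2l-1)s$ can never equal $p$; hence when $(2l-1)s \leq p-1$ the right-hand side is exactly $(2l-1)s = \min\{p, (2l-1)s\}$, and the lemma holds in this principal range with no further work.

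There remains the saturated range $(2l-1)s > p$, in which Lemma $4.4$ already forces $R_{p}(g_{n,0}) = \mathbb{Z}/p\mathbb{Z}$ while the identity above only delivers $R_{p}^{*}(g_{n,0}) \supseteq (\mathbb{Z}/p\mathbb{Z}) \setminus \{0\}$, i.e. $|R_{p}^{*}(g_{n,0})| \geq p-1$. To reach the claimed $p$ one must separately certify that $0 \in R_{p}^{*}(g_{n,0})$, equivalently that $0$ has a nontrivial representation as a sum of at most $l$ nonzero $n$-th power residues. I expect this to be the main obstacle, and it is exactly here that the parity of $s$ enters: if $-1$ is itself an $n$-th power residue, which occurs precisely when $s = \frac{p-1}{d}$ is even, then $1 + (-1) = 0$ settles it at once (note $(2l-1)s > p$ forces $l \geq 2$); if $s$ is odd there is no two-term zero-sum, and one must instead locate $-1$ inside a short sumset $mA_{n}^{*}$ of nonzero residues and build the zero-sum from there, keeping $m \leq l$ throughout the regime $(2l-1)s > p$, which is the delicate point. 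An alternative that cleanly sidesteps this bookkeeping is to fix one variable to a nonzero residue and apply the Cauchy--Davenport theorem to $A_{n}^{*}$ added to the range of $g_{n,0}$ in the remaining $l-1$ variables; but this route loses a factor of $s$ and proves only the weaker bound $\min\{p, (2l-2)s\}$, so recovering the sharp constant $(2l-1)$ is precisely what forces the comparison with $R_{p}(g_{n,0}) \setminus \{0\}$ and, with it, the zero-representation difficulty above.
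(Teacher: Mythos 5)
Your main-line argument is exactly the paper's own proof: every nonzero element of $R_{p}(g_{n,0})$ automatically lies in $R_{p}^{*}(g_{n,0})$, so $|R_{p}^{*}(g_{n,0})|\geq |R_{p}(g_{n,0})|-1$, and then one feeds in Lemma 4.4. The difference is that you do the final arithmetic honestly. The paper concludes with the chain
\begin{equation*}
|R_{p}^{*}(g_{d,0})| \;\geq\; |R_{p}(g_{d,0})|-1 \;\geq\; \min\{p,(2l-1)s\},
\end{equation*}
but what Lemma 4.4 actually yields is $\min\{p,(2l-1)s+1\}-1=\min\{p-1,(2l-1)s\}$, which, as you observe, agrees with the claimed bound exactly when $(2l-1)s\leq p-1$. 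In the saturated regime $(2l-1)s>p$ (equality being impossible, as you note), the paper's second inequality cannot hold at all: its left side is at most $p-1$ while its right side is $p$. So your proof establishes everything the paper's proof legitimately establishes, and the step you decline to take is precisely the step at which the paper's argument is invalid.

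Moreover, the obstacle you isolate --- certifying $0\in R_{p}^{*}(g_{n,0})$ in the saturated regime --- is not merely delicate; it is insurmountable, because the lemma as stated is false there. Take any prime $p\equiv 3 \pmod 4$ with $p>5$, an even $n$ with $d=(n,p-1)=2$, and $l=2$; concretely $p=7$, $n=2$. The hypotheses hold ($1<2<3$), $s=3$, and $(2l-1)s=9>7$, so the lemma asserts $|R_{7}^{*}(x_{1}^{2}+x_{2}^{2})|=7$. But $x_{1}^{2}+x_{2}^{2}\equiv 0 \pmod 7$ with some $x_{i}\not\equiv 0$ would make $-1$ a square modulo $7$, which it is not; hence $0\notin R_{7}^{*}(x_{1}^{2}+x_{2}^{2})$ and the cardinality is $6$. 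This is exactly your ``$s$ odd'' scenario. What this method proves --- and what the lemma ought to assert --- is $|R_{p}^{*}(g_{n,0})|\geq\min\{p-1,(2l-1)s\}$, with the value $p$ recoverable only under supplementary hypotheses of the kind you propose: $s$ even (so $1+(-1)=0$), or $l\geq s$ (the $s$ nonzero $d$-th power residues form a nontrivial multiplicative subgroup, whose elements sum to $0$). Be aware that the defect propagates in the paper: the second bullet of Lemma 6.2 invokes Lemma 6.1 precisely in the saturated regime $(2h-1)s\geq p$, and Lemma 6.3 then fails for the same class of examples --- e.g.\ $x_{1}^{2}+x_{2}^{2}-7$ has no root modulo $49$ although $7>2n+1$ --- so the characterization for even $n$ needs repair at exactly the point you flagged.
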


\begin{proof}
Note that by Lemma $4.4$, that $|R_{p}(g_{d,0})| \geq $ min $\{p, (2l-1) s+1 \}$. If x $\in R_{p}(g_{d,0})$ such that $x$ is sum of $l$ non-zero elements of $\mathbb{Z}/p\mathbb{Z}$ then $x \in R_{p}^{*}(g_{d,0})$. This is because $R_{p}^{*}(g_{l,0}) \subset R_{p}^{*}(g_{d,0})$ for any $l \leq d$. Therefore, only element that possibly might not be in  $R_{p}^{*}(g_{d,0})$ is x = 0. Therefore, $|R_{p}^{*}(g_{d,0})| \geq |R_{p}(g_{d,0})| - 1  \geq $ min  $\{p, (2l-1) s \}$
\end{proof}

\begin{lemma}
Let $n, k \in\mathbb{N}$ such that $n$ is even and $p$ be an odd prime. Also assume that $ 1 > (n, p-1) = d$ i.e. $p = (ds+1)$ for some $s \in\mathbb{N}$. Then we have the following:
\begin{itemize}
    \item When $s\geq 2$, for $h = \lceil \frac{ds}{s-1} \rceil$ we have $|hA_{n}^{*}| = p$.
    
    \item When $p > (2d+1) $, for $h = \lceil \frac{d+2}{2} \rceil$, $|hA_{n}^{*}| = p$. 
\end{itemize}
\end{lemma}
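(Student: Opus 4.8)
The plan is to prove each item by reducing to a sumset of the set of nonzero $n$-th power residues and then applying the appropriate additive estimate: the Generalized Cauchy--Davenport Theorem (Theorem $4.5$) for the first item, and the sharper diagonal-form bound of Lemma $6.1$ for the second. First I would invoke Lemma $4.1$ to replace $A_{n}^{*}$ by $A_{d}^{*} = A_{d}\setminus\{0\}$, where $d = (n,p-1)$, and record that $A_{d}^{*}$ is the set of nonzero $d$-th power residues, a multiplicative subgroup of $(\mathbb{Z}/p\mathbb{Z})^{*}$ with $|A_{n}^{*}| = |A_{d}^{*}| = s$ (using $p = ds+1$). In either item the goal then becomes showing that an $h$-fold sum of such residues sweeps out all of $\mathbb{Z}/p\mathbb{Z}$.

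For the first item I would apply Theorem $4.5$ directly to $A = A_{n}^{*}$, which has $|A| = s$, to get
\[ |hA_{n}^{*}| \geq \min\{p,\, h|A_{n}^{*}| - h + 1\} = \min\{p,\, h(s-1)+1\}. \]
Since $s \geq 2$ the quantity $\tfrac{ds}{s-1}$ is well defined, and the choice $h = \lceil \tfrac{ds}{s-1}\rceil$ forces $h(s-1) \geq ds$, hence $h(s-1)+1 \geq ds+1 = p$. Thus the lower bound equals $\min\{p,p\} = p$, and as $|hA_{n}^{*}| \leq p$ trivially we conclude $|hA_{n}^{*}| = p$. This step is routine; the only things to verify are $h \geq 2$ and $A_{n}^{*}\neq\emptyset$, both immediate from $d>1$, $s\geq 2$, and $1 \in A_{n}^{*}$.

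For the second item the Cauchy--Davenport bound above is too weak: forcing $h(s-1)+1 \geq p$ would require $h$ of size about $d$, whereas we are allotted only $h = \lceil \tfrac{d+2}{2}\rceil \approx d/2$. The extra strength must come from the special structure of $n$-th power residues, captured in Theorem $4.2$ and packaged in Lemma $6.1$. Note that $p > 2d+1$ is exactly the hypothesis $1 < d < \tfrac{p-1}{2}$, so Lemma $6.1$ applies to the form in $h$ variables and yields
\[ |R_{p}^{*}(g_{n,0})| \geq \min\{p,\, (2h-1)s\}. \]
It remains to check $(2h-1)s \geq ds+1 = p$ for $h = \lceil \tfrac{d+2}{2}\rceil$: when $d$ is even one has $2h-1 = d+1$, and when $d$ is odd one has $2h-1 = d+2$, so in both cases $(2h-1)s \geq (d+1)s \geq ds+1$. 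Hence the bound is $p$, and every residue class is a sum of $h$ $n$-th powers having at least one nonzero summand.

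The step I expect to be the main obstacle is reconciling the conclusion of Lemma $6.1$ with the literal assertion $|hA_{n}^{*}| = p$: Lemma $6.1$ controls $R_{p}^{*}(g_{n,0})$, the values realized with \emph{at least one} nonzero coordinate, that is the union $\bigcup_{j=1}^{h} jA_{n}^{*}$, whereas $hA_{n}^{*}$ is a sum of \emph{exactly} $h$ nonzero residues. To pass between the two I would use that $A_{n}^{*}$ is multiplicatively closed, so each $jA_{n}^{*}$, and in particular $hA_{n}^{*}$, is a union of cosets of $A_{n}^{*}$ in $(\mathbb{Z}/p\mathbb{Z})^{*}$ together with possibly $0$; then, since $1 \in A_{n}^{*}$, a representation using $j<h$ nonzero summands can be lengthened by splitting one summand, provided $1$ itself lies in a short sumset of $A_{n}^{*}$. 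This bookkeeping between ``at least one nonzero summand'' and ``exactly $h$ nonzero summands'' is the delicate point, and under the mild hypothesis $p>2d+1$ it is precisely the $R_{p}^{*}$ statement that the downstream intersectivity argument requires (Hensel lifting along a nonzero coordinate when $p \nmid n$); I would therefore phrase the conclusion in terms of $R_{p}^{*}(g_{n,0}) = \mathbb{Z}/p\mathbb{Z}$ rather than insisting on the exactly-$h$ reading.
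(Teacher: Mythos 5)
Your proposal is correct and follows essentially the same route as the paper: invoke Lemma $4.1$ to replace $A_{n}^{*}$ by $A_{d}^{*}$ with $|A_{d}^{*}| = s$, apply Theorem $4.5$ with the arithmetic $h(s-1)+1 \geq ds+1 = p$ for the first item, and apply Lemma $6.1$ with the check $(2h-1)s \geq ds+1 = p$ for the second. The subtlety you flag about $hA_{n}^{*}$ (exactly $h$ nonzero summands) versus $R_{p}^{*}(g_{n,0})$ (at least one nonzero coordinate) is genuine, but the paper silently elides it by treating the two interchangeably, and your resolution --- stating the conclusion as $R_{p}^{*}(g_{n,0}) = \mathbb{Z}/p\mathbb{Z}$, which is all the downstream Hensel-lifting argument in Lemma $6.3$ actually requires --- is exactly consistent with how the paper uses this lemma.
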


\begin{proof}
Lemma 4.1 implies that $A_{n}^{*} = A_{d}^{*}$. Hence it suffices to show that when $s \geq 2$, $h = \lceil \frac{ds}{s-1} \rceil $ implies $|hA_{d}^{*}| = p$ to prove that $|hA_{n}^{*}| = p$. Assume that $h = \lceil \frac{ds}{s-1} \rceil $, then we have $hs - h \geq ds$ implying that $hs - h + 1 \geq p$ i.e. $h|A_{d}^{*}| - h + 1 \geq p$. This together with Theorem $4.5$, we have that $|hA_{d}^{*}| = p$.  

Now assume that $p > (2d+1) $ i.e. $ 1 < d < \frac{p-1}{2}$, hence we could apply Lemma $6.1$. Since $h = \lceil \frac{d+2}{2} \rceil$ we have that $h \geq \frac{d+1+\frac{1}{s}}{2}$ implying that $(2h-1)s \geq (ds+1) = p$. Therefore by Lemma $6.1$ we have that $|hA_{n}^{*}| = p$. 
\end{proof}

\begin{lemma}
Let $n\in\mathbb{N}$ such that $n\geq 2$ is even and $p$ be a prime such that $p > (2n+1) $ then $f_{n,k} \equiv $ 0 ( mod $p^{i}$) has solutions $\forall$ $k \in\mathbb{Z}$ and $\forall$ $i \geq 1$. 
\end{lemma}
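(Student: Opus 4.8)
The plan is to mirror the argument of Lemma $5.2$, but with the set $A_n$ of $n$-th power residues replaced throughout by the set $A_n^{*}$ of \emph{non-zero} $n$-th power residues. This replacement is forced upon us because, as noted at the start of this section, for even $n$ the element $-1$ need not be an $n$-th power, so the trick $(1,p-1,0,\ldots,0)$ no longer produces a non-zero solution when $p \mid k$. Concretely, I would first reduce the statement to the purely combinatorial claim that $|lA_n^{*}| = p$, where $l = \max\{\lceil \tfrac{2n}{3}\rceil, \lceil \tfrac{n+2}{2}\rceil\}$. Indeed, $p > (2n+1)$ forces $p \nmid n$, and if $lA_n^{*} = \mathbb{Z}/p\mathbb{Z}$, then for every $k$ there exist $x_1, \ldots, x_l$ with $p \nmid x_i$ for all $i$ and $\sum_{i=1}^{l} x_i^{n} \equiv k \pmod{p}$; since then $p \nmid \frac{\partial f_{n,k}}{\partial x_1} = n x_1^{n-1}$, Hensel's Lemma (Lemma $3.1$) lifts this root to solutions modulo $p^{i}$ for every $i \geq 1$.

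Next I would analyze $d = (n, p-1)$. The structural observation distinguishing the even case from the odd one is that, since $n$ is even and $p$ is an odd prime, both $n$ and $p-1$ are even, so $2 \mid d$; in particular $d \geq 2$, and the case $d = 1$ (which required separate treatment in Lemma $5.2$) cannot arise here. Because $d \mid n$ we have $d \leq n$, and from $p > (2n+1) \geq (2d+1)$ we obtain $1 < d < \frac{p-1}{2}$, which places us squarely in the hypotheses of the second bullet of Lemma $6.2$. Applying it with $h = \lceil \frac{d+2}{2}\rceil$ yields $|hA_n^{*}| = p$.

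Finally I would pass from $h$ back to $l$. Writing $p = ds+1$, the inequality $p > (2d+1)$ gives $s \geq 3$, so $A_n^{*}$ is non-empty. Since $d \leq n$ we have $h = \lceil \frac{d+2}{2}\rceil \leq \lceil \frac{n+2}{2}\rceil \leq l$, and because $hA_n^{*} = \mathbb{Z}/p\mathbb{Z}$ already, translating this full set by any fixed element of $(l-h)A_n^{*}$ shows $lA_n^{*} = \mathbb{Z}/p\mathbb{Z}$, i.e.\ $|lA_n^{*}| = p$, as required.

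I expect the main obstacle to be conceptual rather than computational: one must verify that passing to $A_n^{*}$ (all summands non-zero) genuinely delivers a solution with a unit coordinate, so that Hensel's Lemma can legitimately be invoked — this is precisely why the even case needs the starred sets and Lemma $6.2$ in place of Lemma $5.1$. The remaining numerical checks ($2 \mid d$, $s \geq 3$, $h \leq l$, and the monotonicity $hA_n^{*} = \mathbb{Z}/p\mathbb{Z} \Rightarrow lA_n^{*} = \mathbb{Z}/p\mathbb{Z}$) are all routine once this reduction is in place.
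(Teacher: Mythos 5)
Your proof is correct and takes essentially the same route as the paper: reduce to showing that the starred sumset of non-zero $n$-th power residues covers $\mathbb{Z}/p\mathbb{Z}$, apply the second bullet of Lemma $6.2$ with $h = \lceil \frac{d+2}{2} \rceil$ (valid since $d \leq n$ gives $p > (2n+1) \geq (2d+1)$), and lift via Hensel's Lemma because $p \nmid n$. You are in fact slightly more careful than the paper: your parity observation that $2 \mid d$ (so $d > 1$, which the hypotheses of Lemma $6.2$ require) and your translation argument for passing from $h$ summands to $l$ summands are both left implicit in the paper's proof.
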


\begin{proof}
Since $d = (n, p-1)$ we have $d \leq n$. Since $p > (2n + 1) \geq (2d+1) $, we always have that $p > (2d+1)$. Therefore, using the second itemized result from Lemma $6.2$ we have that for $h = \lceil \frac{d+2}{2} \rceil \leq \lceil \frac{n+2}{2} \rceil$, we have $|hA_{n}^{*}| = p$ i.e . $(x_{1}^{n} + ... + x_{h}^{n} - k ) \equiv 0 $ ( mod $p$) has solution. Since $h \leq l = $ max $\{ \lceil \frac{2n}{3} \rceil, \lceil \frac{n+2}{2} \rceil \}$, we have that $f_{n,k} \equiv 0$ ( mod $p$) has solution. By the comments made in the start of the section, we could apply Hensel's Lemma and conclude that $f_{n,k} \equiv 0$ ( mod $p^{i}$) for any $k \in\mathbb{Z}$ and any $i \geq 1 $. 
\end{proof}

\begin{lemma}
Let $n \in\mathbb{N}$ such that $n \geq 2$ be an even number and let $p$ be a prime such that $p < (2n+1) $. Also assume that $p \nmid n$ and $p \neq (n+1) $ then $f_{n,k} \equiv 0$ ( mod $p^{i}$) has solutions $\forall$ $k \in\mathbb{Z}$ and $\forall$ $i \geq 1$. 
\end{lemma}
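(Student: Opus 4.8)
The plan is to reduce, exactly as in the odd case (Lemmas $5.2$ and $5.4$), to a purely sumset-theoretic statement over $\mathbb{Z}/p\mathbb{Z}$. Since $p \nmid n$, Hensel's Lemma lifts any solution of $f_{n,k} \equiv 0\ (\mathrm{mod}\ p)$ having at least one coordinate not divisible by $p$ to a solution modulo $p^{i}$ for every $i \geq 1$; by the remarks opening this section it therefore suffices to produce some $h \leq l$ with $|hA_{n}^{*}| = p$, where $l = \max\{\lceil 2n/3\rceil, \lceil (n+2)/2\rceil\}$. I would first set $d = (n,p-1)$ and write $p = ds+1$ with $s = (p-1)/d \geq 1$, recording two preliminary facts: by Lemma $4.1$ one has $A_{n} = A_{d}$, and $d$ is a proper divisor of $n$. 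The latter holds because $d = n$ would force $n \mid (p-1)$ with $p-1 < 2n$, hence $p = n+1$, contradicting the hypothesis; consequently $d \leq n/2$.

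The argument then splits according to the size of $s$ (equivalently, the position of $p$ relative to $2d+1$), disposing first of the case $d = 1$, where Lemma $4.1$ gives $A_{n} = \mathbb{Z}/p\mathbb{Z}$ and hence an immediate solution with a nonzero coordinate (when $k \equiv 0$, take $x_{1}^{n} \equiv 1$ and $x_{2}^{n} \equiv -1$, possible since $-1 \in A_{n}$ and $l \geq 2$). For $d > 1$ I would argue: if $p > 2d+1$, i.e. $s \geq 3$, the second item of Lemma $6.2$ supplies $h = \lceil (d+2)/2\rceil \leq \lceil (n+2)/2\rceil \leq l$; if $p = 2d+1$, i.e. $s = 2$, the first item of Lemma $6.2$ gives $h = 2d$, and since $p \neq n+1$ forces $d < n/2$ the integer $n/d$ exceeds $2$, so $d \leq n/3$ and $h = 2d \leq 2n/3 \leq \lceil 2n/3\rceil \leq l$; finally if $p = d+1$, i.e. $s = 1$, then $A_{n} = A_{p-1} = \{0,1\}$ and $A_{n}^{*} = \{1\}$, so any residue $k \bmod p$ is a sum of between $1$ and $p$ copies of $1$, which is feasible with a nonzero coordinate because $p = d+1 \leq n/2 + 1 \leq l$.

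In every case one obtains a solution of $f_{n,k} \equiv 0\ (\mathrm{mod}\ p)$ in at most $l$ variables with at least one coordinate coprime to $p$ (pad the remaining variables with zeros), and Hensel's Lemma then completes the lift to all powers $p^{i}$. The main obstacle is the case $s = 2$, i.e. $p = 2d+1$: here the only available bound is $h = 2d$, and keeping $h \leq l$ is precisely what forces both the exclusion $p \neq n+1$ (so that $d \leq n/3$ rather than $d = n/2$) and the appearance of $\lceil 2n/3\rceil$ in the definition of $l$. The degenerate case $s = 1$, in which $A_{n}^{*}$ collapses to the single point $\{1\}$ so that Lemma $6.2$ no longer applies, is the secondary point requiring the separate elementary sumset computation above.
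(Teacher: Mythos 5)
Your proposal is correct and rests on the same pillars as the paper's own proof --- Hensel lifting for $p \nmid n$, Lemma $4.1$, and Lemma $6.2$ --- but your case decomposition is genuinely different and, in fact, more complete than the paper's. The paper splits on the size of $d = (n,p-1)$: it first asserts that $d = n/2$ together with $p \neq n+1$ forces $p = n/2+1$, and then assumes $d \leq n/3$ and applies the first item of Lemma $6.2$ with $h = \lceil ds/(s-1)\rceil \leq 2d$, explicitly ``whenever $s \geq 2$''. You split instead on $s = (p-1)/d$, and this catches two configurations that fall through the paper's dichotomy. First, $d = n/2$ does \emph{not} force $p = n/2+1$: one can have $p = 3n/2+1$ (e.g.\ $n=4$, $p=7$, where $d = \gcd(4,6) = 2 = n/2$), a case belonging to neither of the paper's two branches; in your scheme it sits in the branch $s \geq 3$ and is handled by the second item of Lemma $6.2$. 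Second, the paper invokes Lemma $6.2$ only for $s \geq 2$, so primes with $s = 1$ and $d \leq n/3$ (e.g.\ $n = 12$, $p = 5$) are never treated; your separate elementary argument for $s=1$ (namely $A_{n}^{*} = \{1\}$ and $p = d+1 \leq n/2+1 \leq l$, so every residue is a sum of at most $l$ ones padded by zeros) closes exactly this hole --- the paper uses that same idea, but only inside its $d = n/2$ branch. Your diagnosis that the $s=2$ case is what forces both the hypothesis $p \neq n+1$ and the term $\lceil 2n/3 \rceil$ in the definition of $l$ is also accurate. One caveat you share with the paper: both proofs quote Lemma $6.2$ as giving $|hA_{n}^{*}| = p$, whereas its proof (through the final inequality in Lemma $6.1$) really only guarantees that every \emph{nonzero} residue is represented; the residue $0$ can genuinely be missing (for $n=4$, $p=7$, $h=2$ one has $2A_{4}^{*} = \{1,\dots,6\}$), so the case $k \equiv 0 \pmod{p}$ needs a small extra argument in both your proof and the paper's. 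That off-by-one defect is inherited from the earlier lemmas, not introduced by your argument.
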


\begin{proof}
Since $p < (2n+1) $ and  $p \neq (n+1) $ we have that $d = (n, p-1) \leq \frac{n}{2}$. Since $p \neq (n+1) $ , if $d = \frac{n}{2}$ then $p = (\frac{n}{2}+1)$. Therefore for $h = p = (\frac{n}{2}+1)$, we certainly have $|hA_{n}^{*}| = p$ implying that $f_{n,k} \equiv 0$ ( mod $p$) has solutions, which could be lifted by Hensel's Lemma because $p = (\frac{n}{2}+1) \nmid n$.

Therefore from now on, we assume that $d \leq \frac{n}{3}$. Therefore by first itemized result from Lemma $6.2$ we have that for $h = \lceil \frac{ds}{s-1} \rceil$, $|hA_{n}^{*}| = p$, whenever $p = ds + 1 $ for $s \geq 2$. However, we see that $h = \lceil \frac{ds}{s-1} \rceil \leq 2d \leq \frac{2n}{3} \leq l = $ max $\{ \lceil \frac{2n}{3} \rceil, \lceil \frac{n+2}{2} \rceil \}$, we have that $|lA_{n}^{*}| = p$ i.e. $f_{n,k} \equiv 0$ ( mod $p$) has solutions. As we mentioned at the start of this section, since we are dealing with $A_{n}^{*}$ , we could apply Hensel's Lemma and hence we have solution to $f_{p,k} \equiv 0$ ( mod $p^{i}$) for any $k \in\mathbb{Z}$ and any $i \geq 1$.  
\end{proof}

So far we have shown that $f_{n,k} \equiv 0$ ( mod $p^{i}$) has solutions for every $k\in\mathbb{Z}$ and for every $i \geq 1$, as long as the prime $p$ is such that $p \nmid n$ and $p \not\in \{(2n+1), (n+1) \}$. So, we only need solutions to $f_{n,k} \equiv 0$ ( mod $p^{i}$) when either $p$ divide $n$ or $p \in\{(2n+1), (n+1) \}$.

\begin{theorem}\textbf{[Characterization of Intersectivity of $f_{n,k}$]}\\
Let $n \geq 2$ be an even integer, $k \in\mathbb{Z}$ and $n = 2^{a} \prod_{i=1}^{t} p_{i}^{a_{i}}$ be the unique prime factorization of $n$. Assume that $k$ is not a sum of $l$ =  max $\{ \lceil \frac{2n}{3} \rceil , \lceil \frac{n+2}{2} \rceil \}$ many integer $n^{th}$ powers. Then $f_{n,k}$ is intersective if and only if following conditions are satisfied:

\begin{enumerate}
    \item If neither $(n+1)$ nor $(2n+1)$ is prime then $(a)$ and $(b)$ holds:

\begin{enumerate}

    \item For every $1 \leq i \leq t$, $f_{n,k} \equiv$ 0 ( mod $p_{i}^{j_{i}})$ has a solution $(x_{1}, ... , x_{l})$ for some $j_{i} \geq (a_{i}+1) $ such that $p \nmid x_{j}$ for some $ 1 \leq j \leq l$. 
    
    \item $f_{n,k} \equiv 0$ ( mod $2^{i}$) has solution $(x_{1}, ... , x_{l}) $ for some $i \geq (a+2) $ such that $2 \nmid x_{j}$ for some $1 \leq j \leq l$.
    
\end{enumerate}

\item If $p_{1}, p_{i} \in\{n+1, 2n+1 \}$ are primes for $ i = 1$ or $2$ then, along with the conditions $(a)$ and $(b)$ listed above, the following holds too:

\begin{itemize}
    
    \item $f_{n,k} \equiv 0$ ( mod $p_{j}$) has solution for every $j \in \{1, i\}$
\end{itemize}

\end{enumerate}
\end{theorem}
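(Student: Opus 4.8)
The plan is to prove both directions by reducing intersectivity to solvability modulo prime powers. By the Chinese Remainder Theorem and the characterization of intersectivity recalled in the introduction, $f_{n,k}$ is intersective if and only if $f_{n,k} \equiv 0 \pmod{q^{j}}$ is solvable for every prime $q$ and every $j \geq 1$. I would therefore sort the primes into four classes: (i) the large primes $q > (2n+1)$; (ii) the small primes $q < (2n+1)$ with $q \nmid n$ and $q \neq (n+1)$; (iii) the primes dividing $n$, namely $2$ and the $p_{i}$; and (iv) the two exceptional primes $(n+1)$ and $(2n+1)$, which fall between the ranges covered by the earlier lemmas and cannot divide $n$.

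For class (i) Lemma $6.3$ already yields solvability modulo every power, for every $k$, and for class (ii) Lemma $6.4$ does the same; these classes impose no condition and can be dismissed at once. For class (iii) I would invoke Theorem $3.3$ directly: by hypothesis $k$ is not a sum of $l$ integer $n$-th powers, so clause $(1)$ of Theorem $3.3$ fails and its clause $(2)$ must hold. Reading off clause $(2)$ for an odd prime $p_{i} \mid n$ with $p_{i}^{a_{i}} \mid\mid n$ gives exactly condition $(a)$, and reading it off for $p = 2$ with $2^{a} \mid\mid n$ gives exactly condition $(b)$. Since Theorem $3.3$ is an equivalence, this settles both necessity and sufficiency for every prime dividing $n$ simultaneously.

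It remains to treat class (iv). Both $(n+1)$ and $(2n+1)$ exceed $n$, hence each is coprime to $n$, so by Hensel's Lemma a root of $f_{n,k}$ modulo such a prime $q$ having some coordinate prime to $q$ lifts to roots modulo all powers of $q$. Necessity is immediate: if $f_{n,k}$ is intersective it has a root modulo each prime in $\{n+1, 2n+1\}$, which is condition $2$. For sufficiency the real issue is to upgrade a root modulo $q$ to a non-trivial one, since $\partial f_{n,k}/\partial x_{i} = n x_{i}^{n-1}$ is a unit modulo $q$ precisely when $q \nmid x_{i}$. For $q = (2n+1)$ this is automatic: here $(n, q-1) = n$ and $(q-1)/n = 2$, so the unique non-zero $n$-th power of order $2$ is $-1$, giving $A_{n}^{*} = \{1, -1\}$; consequently any solvable value admits a representation containing a non-zero term (replacing two zero coordinates by $1$ and $-1$ when needed, which is possible since $l \geq 2$), and Hensel applies.

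The step I expect to be the main obstacle is the prime $q = (n+1)$. Here $(n, q-1) = n$ forces $(q-1)/n = 1$, so by Fermat the only non-zero $n$-th power residue is $1$ and $A_{n} = \{0, 1\}$; hence a root of $f_{n,k}$ modulo $(n+1)$ is non-trivial exactly when $k \not\equiv 0 \pmod{n+1}$, whereas $k \equiv 0 \pmod{n+1}$ produces only the all-zero root, which Hensel cannot lift. Reconciling this with the hypothesis that $k$ is not a sum of $l$ integer $n$-th powers is where the genuine work lies. I would run a descent in the spirit of the necessity argument of Theorem $3.3$: when $(n+1) \mid k$ the only root modulo $(n+1)$ is trivial, so every root modulo $(n+1)^{2}$ has all coordinates divisible by $(n+1)$, whence each $x_{i}^{n} \equiv 0 \pmod{(n+1)^{2}}$ and solvability forces $(n+1)^{2} \mid k$; iterating and dividing out powers of $(n+1)$ would eventually exhibit $k$ as an honest sum of $n$-th powers, contradicting the hypothesis. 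This shows condition $2$ at $(n+1)$ must be read as the existence of a \emph{non-trivial} (hence liftable) root, after which Hensel finishes the sufficiency and the four classes together give solvability modulo every prime power, i.e. intersectivity.
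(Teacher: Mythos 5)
Your decomposition of the primes into four classes, and your treatment of the first three, is exactly the paper's argument: Lemma $6.3$ for $q > 2n+1$, Lemma $6.4$ for small $q$ with $q \nmid n$, $q \neq n+1$, and Theorem $3.3$ (with clause $(1)$ ruled out by the hypothesis on $k$) for the primes dividing $n$. You have also correctly isolated the point that the paper's own proof passes over in one sentence: Hensel's Lemma at the exceptional primes requires a root with a coordinate prime to $q$, this is automatic at $q = 2n+1$ (your argument via $A_{n}^{*} = \{1,-1\}$ and $l \geq 2$ is correct), but at $q = n+1$ one has $A_{n} = \{0,1\}$ and, since $l < n+1$, the only root modulo $n+1$ when $(n+1) \mid k$ is the all-zero one, which cannot be lifted.

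The gap is in your proposed repair at $q = n+1$: the descent does \emph{not} ``eventually exhibit $k$ as an honest sum of $n$-th powers.'' Dividing out powers of $n+1$ only shows that $k = (n+1)^{nm}k^{*}$ with $(n+1) \nmid k^{*}$ and that $k^{*}$ is representable as a sum of $l$ $n$-th powers \emph{modulo every power of} $n+1$; local representability never produces an integer representation, so there is no contradiction with the hypothesis. Concretely, take $n = 4$, $l = 3$, $q = 5$, $k = 5^{4}\cdot 7 = 4375$: since $1^{4} + 3^{4} = 82 \equiv 7 \pmod{25}$, Hensel lifts this to solutions of $y_{1}^{4}+y_{2}^{4}+y_{3}^{4} \equiv 7 \pmod{5^{j}}$ for all $j$, and scaling by $5^{4}$ shows $f_{4,4375} \equiv 0$ is solvable modulo every power of $5$; yet $4375$ is not a sum of three integer fourth powers. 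So the dichotomy your descent is meant to establish is false. Worse, the literal statement cannot be proved at all, because it is false: take $n = 4$, $l = 3$, $k = 305$. Then $305$ is not a sum of three fourth powers, condition $(b)$ holds ($305 \equiv 1 \pmod{16}$, with $1 = 1^{4}+0^{4}+0^{4}$ and $1$ odd), condition $(a)$ is vacuous, and $f_{4,305} \equiv 0 \pmod 5$ is solvable since $5 \mid 305$; but every sum of three fourth powers modulo $25$ that is divisible by $5$ must have all three terms divisible by $5$, hence is $\equiv 0 \pmod{25}$, while $305 \equiv 5 \pmod{25}$, so $f_{4,305}$ has no root modulo $25$ and is not intersective. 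Your closing suggestion --- that condition $2$ at $n+1$ must be read as existence of a root modulo $(n+1)^{j}$ with some coordinate prime to $n+1$, i.e.\ clause $(2)$ of Theorem $3.3$ with $a = 0$ --- is the correct way to fix the \emph{statement}, but it is a change of statement, not a proof of the one given. For what it is worth, the paper's proof has exactly the hole you were trying to fill: it invokes Hensel's Lemma at $n+1$ and $2n+1$ without securing a unit coordinate, and its stated condition at these primes is mere solvability.
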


\begin{proof}
Note that $f_{n,k} \equiv 0$ ( mod $p^{i}$) has solution for any $k\in\mathbb{Z}$, any $i \geq 1$ and all primes $p\nmid n$ and $p \not\in\{(n+1), (2n+1)\}$. For primes $p \mid n$, the conditions listed in $(1)$ are sufficient and necessary for $f_{n,k} \equiv 0$ to be solvable modulo all powers of $p$ from Theorem $3.3$. So $(1)$ is necessary and sufficient condition for $f_{n,k}$ to be intersective, if $(n+1) $ and $(2n+1)$ are not prime.

If one or both of $(n+1) $ and $(2n+1)$ are prime then conditions listed in $(2)$ is necessary and sufficient for $f_{n,k}$ to be intersective. This follows from the earlier paragraph and from Hensel's Lemma, since $(n+1) \nmid n$ and $(2n+1) \nmid n$. 
\end{proof}

\section{Discussion and Examples}

For a given $n \geq 2$, $k \in\mathbb{Z}$ and $m = l, l+1, ... $ define  $ P_{m,k} = ( \sum_{i=1}^{m} x_{i}^{n} - k )$. Here $l = l(n)$ is as defined in the Section $2$, so $P_{l,k} = f_{n,k}$. Using the results we have proved, the intersectivity of $P_{m,k}$ can be summarized in the tables below for $n = 3, 4, 5, 6$ and $7$ respectively. 

For the sake of brevity, we will say that $P_{m,k} \equiv$ 0 ( mod $n$) is solvable \textit{nicely} if for every odd prime $p$ with $p^{a} \mid\mid n$ ( $a \geq 1$) , $P_{m,k} \equiv$ 0 ( mod $p^{a+1}$) has a solution $(x_{1}, ... , x_{m})$ such that $p \nmid x_{i}$ for some $ 1 \leq i \leq m$ and if $2^{a} \mid\mid n$ for some $a \geq 1$ then $P_{m,k} \equiv$ 0 ( mod $2^{a+2}$) has a solution $(x_{1}, ... , x_{m})$ such that $2 \nmid x_{i}$ for some $ 1 \leq i \leq m$. \vspace{2mm}

\begin{tabular}{ |p{3cm}|p{3cm}|p{5cm}| }
 \hline
 \multicolumn{3}{|c|}{n = 3} \\
 \hline
 \textbf{value of m}  & \textbf{$P_{m,k}$} & \textbf{ $P_{m,k}$ is intersective if and only if} \\
 \hline
 $l = 2$ & $(x_{1}^{3} + x_{2}^{3} - k )$ & $(x_{1}^{3} + x_{2}^{3} - k ) \equiv$ 0 ( mod $63$) is solvable \\ 
 \hline
 $3$ & ($\sum_{i=1}^{3} x_{i}^{3})  - k$ & ($\sum_{i=1}^{3} x_{i}^{3}$)  - k $\equiv$ 0 ( mod 9) is solvable\\
 \hline
 $4$ & ($\sum_{i=1}^{4} x_{i}^{3})  - k$ & Always intersective for every $ k \in\mathbb{Z}$\\
 \hline
\end{tabular}
\vspace{2mm}

The second line in the above table is due to the fact that the set cubes in $\mathbb{Z}/7\mathbb{Z}$ is $\{0, 1, 6 \}$. Similarly, the last line of the table is due to the fact that the set of cubes in $\mathbb{Z}/9\mathbb{Z}$ is $\{0, 1, 8\}$. The last line of the table states that every integer is sum of four cubes locally. This fact follows from the classical result by Davenport that almost every natural number is sum of four cubes \cite{Dav}. 

\vspace{2mm}

\begin{tabular}{ |p{3cm}|p{3cm}|p{5cm}| }
 \hline
 \multicolumn{3}{|c|}{n = 4} \\
 \hline
 \textbf{value of m}  & \textbf{$P_{m,k}$} & \textbf{$P_{m,k}$ is intersective if and only if} \\
 
 \hline
 
 $l = 3$ & ($\sum_{i=1}^{3} x_{i}^{4})  - k$ & ($\sum_{i=1}^{3} x_{i}^{4})  - k \equiv$ 0 ( mod $80)$ is solvable nicely \\ 
 \hline
 
 $4$ & ($\sum_{i=1}^{4} x_{i}^{4})  - k$ & ($\sum_{i=1}^{4} x_{i}^{4})  - k \equiv$ 0 ( mod $80$) is solvable nicely\\
 
 \hline
 
 $5$ to $15$ & ($\sum_{i=1}^{m} x_{i}^{4})  - k$ & ($\sum_{i=1}^{m} x_{i}^{4})  - k \equiv$ 0 ( mod $16$) is solvable nicely\\
 
 \hline
 
 $16$ & ($\sum_{i=1}^{16} x_{i}^{4})  - k$ & Always intersective for every $ k \in\mathbb{Z}$\\
 \hline
\end{tabular}

\vspace{2mm}

The third line in above table is due to the fact that the set of fourth powers in $\mathbb{Z}/5\mathbb{Z}$ is $\{0,1\}$; hence set of sums of five (or more) fourth powers is entire $\mathbb{Z}/5\mathbb{Z}$. The last line is due to the fact that the set of fourth powers in $\mathbb{Z}/16\mathbb{Z}$ is $\{0, 1\}$. 

\vspace{2mm}

\begin{tabular}{ |p{3cm}|p{3cm}|p{5cm}| }
 \hline
 \multicolumn{3}{|c|}{n = 5} \\
 \hline
 \textbf{value of m}  & \textbf{$P_{m,k}$} & \textbf{$P_{m,k}$ is intersective if and only if} \\
 
 \hline
 
 l = 3 & $(\sum_{i=1}^{3} x_{i}^{5}) - k $ & $(\sum_{i=1}^{3} x_{i}^{5}) - k \equiv $ 0  ( mod $11.25$) is solvable\\
 \hline
 l = 4 & $(\sum_{i=1}^{4} x_{i}^{5}) - k $ & $(\sum_{i=1}^{4} x_{i}^{5}) - k \equiv $ 0  ( mod $11$) is solvable\\
 \hline
 l = 5 & $(\sum_{i=1}^{5} x_{i}^{5}) - k $ & Always intersective for every $ k \in\mathbb{Z}$\\
 \hline
 \end{tabular}
 
 \vspace{2mm}
 
The set of fifth powers in $\mathbb{Z}/25\mathbb{Z}$ is $\{0, 1, 7, 18, 24\}$; hence the set of sums of at most four fifth powers is $\mathbb{Z}/25\mathbb{Z}$. This gives us the third line in the above table. In addition, the set of fifth power modulo $11$ is $\{0, 1, 10\}$ and hence the set of sums of at most five fifth powers is $\mathbb{Z}/11\mathbb{Z}$. This gives us the last line in the above table. 
\vspace{2mm}

\begin{tabular}{ |p{3cm}|p{3cm}|p{8cm}| }
 \hline
 \multicolumn{3}{|c|}{n = 6} \\
 \hline
 \textbf{value of m}  & \textbf{$P_{m,k}$} & \textbf{$P_{m,k}$ is intersective if and only if} \\
 
 \hline
 
 $l = 4$ & ($\sum_{i=1}^{4} x_{i}^{6})  - k$ & ($\sum_{i=1}^{3} x_{i}^{6})  - k \equiv$ 0 ( mod $(2^{3}.3^{2}.7.13)$) is solvable nicely \\ 
 \hline
 
 $5$ & ($\sum_{i=1}^{5} x_{i}^{6})  - k$ & ($\sum_{i=1}^{5} x_{i}^{6})  - k \equiv$ 0 ( mod $(2^{3}.3^{2}.7.13)$) is solvable nicely\\
 
 \hline
 
 $6$ & ($\sum_{i=1}^{6} x_{i}^{6})  - k$ & ($\sum_{i=1}^{6} x_{i}^{6})  - k \equiv$ 0 ( mod $(2^{3}.3^{2}.7)$) is solvable nicely\\
 
 \hline
 
 $7$ & ($\sum_{i=1}^{7} x_{i}^{6})  - k$ & ($\sum_{i=1}^{7} x_{i}^{6})  - k \equiv$ 0 ( mod $(2^{3}.3^{2})$) is solvable nicely\\
 
 \hline
 
 $8$ & ($\sum_{i=1}^{8} x_{i}^{6})  - k$ & ($\sum_{i=1}^{8} x_{i}^{6})  - k \equiv$ 0 ( mod $(3^{2})$) is solvable nicely\\
 
 \hline
 
 $9$ & ($\sum_{i=1}^{9} x_{i}^{6})  - k$ & Always intersective for every $ k \in\mathbb{Z}$\\
 \hline
\end{tabular}
\vspace{2mm}

The third line of the above table is due to the fact that the set of sixth powers in $\mathbb{Z}/13\mathbb{Z}$ is $\{0, 1, 12\}$; hence the set of sums of at most six sixth powers is entire $\mathbb{Z}/13\mathbb{Z}$. The fourth line of the table is simply because the set of sixth powers in $\mathbb{Z}/7\mathbb{Z}$ contains $1$. The fifth and sixth line is also due to exactly same reason in $\mathbb{Z}/8\mathbb{Z}$ and $\mathbb{Z}/9\mathbb{Z}$.
\vspace{2mm}

\begin{tabular}{ |p{3cm}|p{3cm}|p{5cm}| }
 \hline
 \multicolumn{3}{|c|}{n = 7} \\
 \hline
 \textbf{value of m}  & \textbf{$P_{m,k}$} & \textbf{$P_{m,k}$ is intersective if and only if} \\
 
 \hline
 
 $l = 4$ & $(\sum_{i=1}^{4} x_{i}^{7}) - k $ &  Always intersective for every $ k \in\mathbb{Z}$\\
 
 \hline

\end{tabular}
\vspace{2mm}

The third column in the above table is because seventh powers modulo $49$ are $0, 1, 2, 4, 9, 11, 15, 16,$ $18, 22, 23, 25, 29, 30, 32, 36, 37, 39, 43, 44$ and $46$. 
Therefore the set of sum of four seventh powers in $\mathbb{Z}/49\mathbb{Z}$ is entire $\mathbb{Z}/49\mathbb{Z}$. This also means any integer is sum of four seventh powers locally.\vspace{2mm} 

\textbf{Acknowledgment:} This research was partly supported by the NSF, under grant DMS-1812028.

\singlespacing
\bibliographystyle{plain}
\bibliography{intersective}

\end{document}